\numberwithin{equation}{section}
\theoremstyle{plain}
\newtheorem{theorem}{Theorem}[section]
\newtheorem{lemma}[theorem]{Lemma}
\newtheorem{corollary}[theorem]{Corollary}
\newtheorem{conjecture}[theorem]{Conjecture}
\theoremstyle{definition}
\newcommand{\Z}{\mathbb{Z}}
\newcommand{\myC}{\mathbb{C}}
\newcommand{\N}{\mathbb{N}}
\newcommand{\mmod}{\ {\rm mod}\ }
\DeclareMathOperator{\sign}{sign}
\begin{document}
	
\title{On sums of coefficients of Borwein type polynomials over arithmetic progressions}

\author{Jiyou Li}
\address{Department of Mathematics, Shanghai Jiao Tong University, Shanghai, P.R. China}
\email{lijiyou@sjtu.edu.cn}

\thanks{This work was supported in part by the National Science Foundation of China (11771280).}
\author{Xiang Yu}
\address{Department of Mathematics, City University of Hong Kong, Kowloon Tong, Hong Kong}
\email{xianyu3-c@my.cityu.edu.hk}
\maketitle

\begin{abstract}
	We obtain asymptotic formulas for sums over arithmetic progressions of coefficients of polynomials of the form
	$$\prod_{j=1}^n\prod_{k=1}^{p-1}(1-q^{pj-k})^s,$$
	where $p$ is an odd prime and $n, s$ are positive integers. Let us denote by $a_i$ the coefficient of $q^i$ in the above polynomial and suppose that  $b$ is an integer. We prove that
		$$\Big|\sum_{i\equiv b\mmod 2pn}a_i-\frac{v(b)p^{sn}}{2pn}\Big|\leq p^{sn/2},$$
		where $v(b)=p-1$ if $b$ divisible by $p$ and $v(b)=-1$ otherwise. This improves a recent result of Goswami and Pantangi \cite{GP}.
\end{abstract}

\section{Introduction}

For an odd prime $p$ and positive integers $n, s$, let  the sequence $(a_i)$ be defined by
\begin{equation}\label{eq:P}
\Big(\frac{(q;q)_{pn}}{(q^p;q^p)_n}\Big)^s=\sum_{i=0}^{sn^2(p-1)p/2} a_i q^i,
\end{equation}
where here and henceforth we use the standard definition of $q$-shifted factorial: 
$$
(a;q)_0=1,\quad (a;q)_n=\prod_{k=1}^{n-1}(1-aq^k),\quad n\geq 1.$$
 In 1990, Peter Borwein discovered some intriguing sign patterns of the coefficients $a_i$ for three different cases $(p, s)=(3, 1), (3, 2), (5, 1)$. They have three  repeating sign patterns of $+--$, $+--$ and $+----$, respectively.
Equivalently, the sign of $a_i$ is determined by $i$ modulo $p$.
These conjectures were formalized by Andrews in 1995 \cite{GA}. 
\begin{conjecture}[First Borwein conjecture]
	For the polynomials $A_n(q)$, $B_n(q)$ and $C_n(q)$ defined by
	$$
	\frac{(q;q)_{3n}}{(q^3;q^3)_n}=A_n(q^3)-qB_n(q^3)-q^2C_n(q^3),$$
	each has non-negative coefficients.
\end{conjecture}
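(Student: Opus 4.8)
The statement is the classical First Borwein conjecture, so I will outline the route I believe can succeed and isolate the genuine difficulty rather than pretend the matter is routine. Expanding the product, write
$$G_n(q):=\frac{(q;q)_{3n}}{(q^3;q^3)_n}=\prod_{j=1}^{n}(1-q^{3j-2})(1-q^{3j-1})=\sum_{\ell\ge 0}a_\ell\,q^\ell,$$
so that $A_n(q^3)$, $-qB_n(q^3)$ and $-q^2C_n(q^3)$ collect the terms with $\ell\equiv 0,1,2\pmod 3$. The conjecture is therefore equivalent to the single sign rule: $a_\ell\ge 0$ when $3\mid\ell$ and $a_\ell\le 0$ when $3\nmid\ell$. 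Combinatorially $a_\ell=\sum_T(-1)^{|T|}$, summed over subsets $T\subseteq\{k\le 3n:3\nmid k\}$ with $\sum_{k\in T}k=\ell$, so the task is to control the cancellation in a signed partition count. I note at the outset that the obvious three-term recurrences relating $A_n,B_n,C_n$ to their predecessors do not visibly preserve positivity, which is why a global estimate rather than a bare induction seems necessary.

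A useful orientation is that the progression estimate proved above already gives the conjecture \emph{on average}: specializing $p=3$, $s=1$, $m=6n$ yields $\sum_{\ell\equiv b\mmod 6n}a_\ell=\tfrac{v(b)3^{n}}{6n}+O(3^{n/2})$, which for large $n$ is positive exactly when $3\mid b$ and negative otherwise — the Borwein pattern, but only for the sum of the $\approx n/2$ coefficients lying in each class modulo $6n$. The problem is to \emph{localize} this: to upgrade the correct sign of these block sums to the correct sign of every individual coefficient.

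For this I would analyse the Fourier/circle-method representation
$$a_\ell=\frac{1}{2\pi}\int_{-\pi}^{\pi}G_n(e^{i\theta})\,e^{-i\ell\theta}\,d\theta,\qquad G_n(e^{i\theta})=\prod_{\substack{k\le 3n\\ 3\nmid k}}\bigl(1-e^{ik\theta}\bigr).$$
The relevant major arcs sit at the cube roots of unity: $G_n$ vanishes to high order at $\theta=0$, whereas $|G_n|=3^{n}$ at $\theta=\pm 2\pi/3$ and every other point of the circle gives an exponentially smaller value, so the dominant contributions come from neighbourhoods of $\omega$ and $\omega^2$. A local Gaussian (Hardy--Ramanujan/Meinardus type) evaluation there yields a main term of size $\asymp 3^{n}n^{-3/2}$ carrying the phase $\omega^{\mp\ell}$; since $G_n$ has real coefficients the two contributions are complex conjugates, and their sum is, to leading order, proportional to $2\cos(2\pi\ell/3)=v(\ell)$ times a positive factor, which is exactly the predicted sign.

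The main obstacle is to make the error uniformly smaller than this main term. In the bulk, where $\ell$ is near the centre $3n^{2}/2$ of the support, the Gaussian peak dominates and the sign follows at once; the serious work is at the two ends of the polynomial, where $a_\ell$ itself is small, the saddle degenerates, and the leading phase no longer obviously decides the sign. There one must either carry the asymptotic expansion to enough orders that the correction provably stays below the main term, or dispatch the edge coefficients by a separate combinatorial argument, possibly combined with an induction on $n$ that is seeded by the bulk estimate. Securing this uniform domination throughout the transitional regime is the crux; by comparison the Gaussian evaluations at $\omega,\omega^2$ and the bounds on the minor arcs are routine.
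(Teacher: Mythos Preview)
The paper does not prove this statement; it is recorded there as a \emph{conjecture} and the text explicitly attributes the only known proof to Wang~\cite{CW}, by the saddle point method. The paper's own contribution is the much weaker progression estimate (Theorem~\ref{thm:main}), which you correctly recognise as giving only the averaged sign pattern.

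Your outline is essentially a sketch of Wang's strategy: treat $a_\ell$ by the circle method, identify the primitive cube roots of unity as the dominant saddle points, extract the phase $\omega^{\mp\ell}$ that produces the $v(\ell)$ sign, and bound everything else below the main term. You have located the genuine difficulty accurately --- the uniform control when $\ell$ is near the ends of the support, where the main term is tiny and the saddle degenerates --- but you have not resolved it, and you say so yourself. That step is not a technicality: it is where essentially all of the work in Wang's paper lies, and it requires Andrews' explicit formulas for $A_n,B_n,C_n$ together with a delicate multi-term asymptotic analysis, not merely ``carrying the expansion to enough orders'' in the abstract. As written, your proposal is an honest and well-informed plan, but it is not a proof; the gap you name is real and substantial, and nothing in the present paper supplies the missing ingredient.
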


\begin{conjecture}[Second Borwein conjecture]
	For the polynomials $\alpha_n(q)$, $\beta_n(q)$ and $\gamma_n(q)$ defined by
	$$
	\Big(\frac{(q;q)_{3n}}{(q^3;q^3)_n}\Big)^2=\alpha_n(q^3)-q\beta_n(q^3)-q^2\gamma_n(q^3),$$
	each has non-negative coefficients
\end{conjecture}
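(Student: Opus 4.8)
Write $P(q):=\big((q;q)_{3n}/(q^{3};q^{3})_{n}\big)^{2}=\sum_{i} a_{i}q^{i}$, a palindromic polynomial of degree $6n^{2}$ whose $4n$ linear factors are the $(1-q^{m})$ with $1\le m\le 3n-1$ and $3\nmid m$, each squared. The three stated series $\alpha_n,\beta_n,\gamma_n$ simply collect the $a_i$ according to $i\bmod 3$, so the Second Borwein conjecture is exactly the sign rule
$$a_i\ge 0\ (i\equiv 0),\qquad a_i\le 0\ (i\equiv 1,2)\pmod 3.$$
The palindromy $a_i=a_{6n^2-i}$, together with $6n^2\equiv 0\pmod 3$, swaps the classes $i\equiv 1$ and $i\equiv 2$, so it suffices to treat $i\equiv 0$ and $i\equiv 1$. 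Fixing a target exponent $m$, the plan is to read off the sign of $a_m$ from the contour integral
$$a_m=\frac{1}{2\pi\sqrt{-1}}\oint_{|q|=r}P(q)\,\frac{dq}{q^{m+1}}=\frac{r^{-m}}{2\pi}\int_{-\pi}^{\pi}P(re^{\sqrt{-1}\phi})\,e^{-\sqrt{-1}m\phi}\,d\phi,$$
with $r=r(m,n)\in(0,1)$ a saddle radius, by a steepest-descent analysis as $n\to\infty$, verifying the finitely many remaining $n$ directly.

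The decisive feature is \emph{where} $|P(re^{\sqrt{-1}\phi})|$ is largest on the circle. Because every exponent $m$ occurring in $P$ satisfies $3\nmid m$, at the directions $\phi=\pm 2\pi/3$ each factor has modulus $\sqrt{1+r^{m}+r^{2m}}>1$ — it equals $1-r^{m}\omega$ or $1-r^{m}\bar\omega$ at $\phi=2\pi/3$, with $\omega=e^{2\pi\sqrt{-1}/3}$, according as $m\equiv 1$ or $2$ — whereas at $\phi=0$ it is $1-r^{m}<1$. Hence $|P|$ peaks at the two cube-root-of-unity directions and the positive real ray is subdominant. The steps are then: (i) locate the two dominant saddles $q_{\pm}$ of $P(q)q^{-m}$, lying near the rays $\arg q=\pm 2\pi/3$, and evaluate the integral there by the Gaussian approximation, producing a main term $a_m\approx 2\,\mathrm{Re}\!\big(A_n(m)\,\omega^{-m}\big)=2\,|A_n(m)|\cos\!\big(\tfrac{2\pi m}{3}-\arg A_n(m)\big)$; (ii) observe that $2\cos(2\pi m/3)$ equals $2,-1,-1$ for $m\equiv 0,1,2$ — the very weights $v(b)$ of the averaged theorem — so the main term carries the $+,-,-$ pattern \emph{provided} the slowly varying phase obeys $|\arg A_n(m)|<\pi/6$; (iii) bound the discarded pieces — the subdominant $\phi=0$ saddle, the Gaussian tails, and the higher-order terms of the expansion of $\log P$ at $\omega,\bar\omega$ — and show their total stays inside the margin left by the cosine; (iv) treat the edge ranges $m=O(n)$ and $m=6n^2-O(n)$, where $r\to 0$ and the saddles coalesce, by a direct small-exponent argument; and (v) clear the finitely many small $n$ by computation.

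I expect the real difficulty to sit in steps (ii)--(iii): the sign of $a_m$ is decided not by a single term but by the interference of the two conjugate saddles, and the conjecture is precisely the statement that the accumulated phase $\arg A_n(m)$ — built from the nonzero argument of $P(q_+)\approx\prod(1-r^{m}\omega^{m})$ and from the curvature correction at the saddle — never leaves the narrow window $(-\pi/6,\pi/6)$, uniformly in $m$ and $n$, with room to spare for the errors of (iii). This pointwise phase control is far more demanding than the averaged estimate proved in the present paper: the bound $\big|\sum_{i\equiv b\mmod 6n}a_i-v(b)3^{2n}/(6n)\big|\le 3^{n}$ pins down only the total mass of a whole progression and is in fact a first-moment \emph{consequence} of the sign rule, not a tool for proving it, because summing averages away exactly the delicate phase that must be tracked coefficient by coefficient. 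Finally, squaring ($s=2$) doubles the order of vanishing of $P$ at $q=1$ and sharpens both $|A_n(m)|$ and the error scales, tightening the $\pi/6$ margin relative to the first ($s=1$) Borwein conjecture; showing that the margin nonetheless survives — most plausibly via sharp, uniform asymptotics for the subleading coefficients of $\log\big((q;q)_{3n}/(q^3;q^3)_n\big)$ near the primitive cube roots of unity — is the crux of any such proof.
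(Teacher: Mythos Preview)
The paper does not prove this statement: the Second Borwein Conjecture is listed in the introduction as an open problem, and the paper's actual result (Theorem~\ref{thm:main}) is the averaged estimate $\big|S_{2pn,b}-v(b)p^{sn}/(2pn)\big|\le p^{sn/2}$, which for $(p,s)=(3,2)$ gives information only about $\sum_{i\equiv b\bmod 6n}a_i$. There is therefore no ``paper's own proof'' to compare against.

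As for your proposal itself, it is a strategy outline rather than a proof, and you say so explicitly: you list steps (i)--(v) and then write that ``the real difficulty'' is the uniform phase bound $|\arg A_n(m)|<\pi/6$ in (ii)--(iii), which you do not establish. That is indeed the entire content of the conjecture once the saddle-point framework is set up; everything before it is standard. Wang's proof of the First Borwein Conjecture succeeds precisely by carrying out this phase/error analysis for $s=1$, and the paper you are reading remarks that ``it is not clear whether his method can be applied to other conjectures.'' Your final paragraph correctly observes that squaring tightens the margin, but tightening is the wrong direction: you would need to show that the margin \emph{survives}, and nothing in the outline does this. In particular, the claim that the paper's averaged bound is ``a first-moment consequence of the sign rule'' is misleading in the other direction too --- the sign rule does not by itself imply the quantitative main term $v(b)p^{sn}/(2pn)$ with error $p^{sn/2}$, so neither statement subsumes the other. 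What you have written is an accurate description of what a saddle-point proof would have to do, not a proof.
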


\begin{conjecture}[Third Borwein conjecture]
	For the polynomials $v_n(q),\phi_n(q),\chi_n(q),\psi_n(q)$ and $\omega_n(q)$ defined by
	$$ \frac{(q;q)_{5n}}{(q^5;q^5)_n}=v_n(q^5)-q\phi_n(q^5)-q^2\chi_n(q^5)-q^3\psi_n(^5)-q^4\omega_n(q^5),$$
	each has non-negative coefficients.
\end{conjecture}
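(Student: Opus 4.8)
The plan is to recast the conjecture as a single sign statement about the product
\[
F(q)=\frac{(q;q)_{5n}}{(q^5;q^5)_n}=\prod_{\substack{1\le j\le 5n\\ 5\nmid j}}(1-q^j)=\sum_i a_iq^i,
\]
namely that $a_i\ge 0$ when $5\mid i$ and $a_i\le 0$ otherwise; the five dissection polynomials are then $v_n(Q)=\sum_{5\mid i}a_iQ^{i/5}$ together with $\phi_n,\chi_n,\psi_n,\omega_n$, which collect $-a_i$ over the classes $i\equiv 1,2,3,4\pmod 5$. Before attacking positivity I would record two reductions. First, $F$ is a product of an even number $4n$ of factors $(1-q^j)$ whose exponents sum to $10n^2$, so $q^{10n^2}F(1/q)=(-1)^{4n}F(q)=F(q)$ and $F$ is palindromic: $a_i=a_{10n^2-i}$. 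Since $10n^2\equiv 0\pmod 5$ this pairs the class $i\equiv r$ with $i\equiv -r$, whence $\omega_n$ is the reversal of $\phi_n$ and $\psi_n$ the reversal of $\chi_n$; it therefore suffices to prove non-negativity for the three polynomials $v_n,\phi_n,\chi_n$. Second, I would isolate the components by the fifth-root-of-unity filter $q^rG_r(q^5)=\tfrac15\sum_{k=0}^4\zeta^{-rk}F(\zeta^k q)$ with $\zeta=e^{2\pi i/5}$, so that the coefficient of $Q^m$ in $G_r$ is exactly $a_{5m+r}$.

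The analytic heart is to show that the sign of each $a_{5m+r}$ is dictated by the values of $F$ at the primitive fifth roots of unity. Writing $a_i=\frac{1}{2\pi i}\oint F(q)\,q^{-i-1}\,dq$ over the unit circle and dissecting it into arcs centred at the tenth roots of unity, the dominant contribution comes from the four arcs at the primitive fifth roots $\zeta^k$ ($k=1,2,3,4$), the tallest peaks of $|F|$, where $F(\zeta^k)=\big(\prod_{l=1}^4(1-\zeta^{kl})\big)^n=5^n$. The phases carried by these four evaluations combine as $\sum_{k=1}^4\zeta^{-ki}=v(i)$, which equals $4$ when $5\mid i$ and $-1$ otherwise, reproducing exactly the conjectured sign pattern; this is the pointwise refinement of the averaged identity behind the paper's estimate, whose main term $v(b)\,5^n/(10n)$ comes from the very same four points. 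The remaining stationary points of $|F|$ on the circle---at $q=1$ and $q=-1$, where $F$ vanishes, and at the primitive tenth roots, where $|F|=5^{n/2}$---are smaller by a factor of order $5^{n/2}$ and feed only into the error.

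The hard part will be the uniform, pointwise control of that error. The filter argument that proves the paper's theorem only bounds the error after summation over an entire residue class modulo $10n$---an $\ell^1$ bound of size $5^{n/2}$ coming from the tenth-root evaluations---and such an averaged bound is far too weak to fix the sign of an individual $a_{5m+r}$, most delicately near the two ends of each dissection polynomial, where the coefficients are smallest and the cancellation between arcs is tightest. To upgrade the estimate to every index I would aim for a manifestly positive representation of $v_n,\phi_n,\chi_n$: a rewriting as a sum, with non-negative coefficients, of products of Gaussian binomials $\binom{N}{k}_q$, in the spirit of the positive $q$-series identities that resolve the first Borwein conjecture for $p=3$; failing a closed form, I would combine a uniform saddle-point expansion of $a_i$ valid for all $n$ past an explicit threshold with a finite verification of the remaining small cases. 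Manufacturing such a positive form at $p=5$, where the relevant theta-quotients do not split as cleanly as at $p=3$, is the central obstacle, and is exactly why the third conjecture has withstood the techniques that settle the first two.
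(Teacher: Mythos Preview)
The statement you are addressing is not proved in the paper at all: it is Conjecture~1.3, the \emph{Third Borwein Conjecture}, which the paper records as open. The paper's contribution is Theorem~\ref{thm:main}, an averaged estimate for $S_{2pn,b}$; it never claims, and does not attempt, a proof of any of the three Borwein conjectures. So there is no ``paper's own proof'' to compare against.

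Your proposal is accordingly not a proof either, and you say so yourself. The preliminary reductions are correct: $F$ has $4n$ factors, degree $10n^2$, is palindromic, and hence $\omega_n,\psi_n$ are reversals of $\phi_n,\chi_n$; the root-of-unity filter and the identification of the main term $v(i)\,5^n$ from the four primitive fifth roots are also fine and match the mechanism behind the paper's averaged result. But the substantive step---upgrading an $\ell^1$ bound over a residue class to a pointwise sign for every coefficient, or alternatively exhibiting a manifestly positive $q$-binomial expansion at $p=5$---is exactly what you label ``the central obstacle,'' and you do not supply it. A saddle-point expansion ``valid for all $n$ past an explicit threshold'' plus finite verification is the Wang strategy for $p=3$; carrying it out for $p=5$ requires controlling the minor arcs and the tails uniformly in both $n$ and the index $i$, including the delicate boundary coefficients, and nothing in your sketch indicates how to do that. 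As written, this is a reasonable outline of why the conjecture is plausible and of where the difficulty lies, but it is not a proof, and the paper does not contain one either.
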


Many attempts have been made to prove these conjectures, including several important generalizations; see for instance 
\cite{GA, Be, BW, BS, B, IKS, RP, W1, W2, AZ}.    However, all these conjectures had been open for many years.
It was not until 2019 that  Wang \cite{CW} first gave an analytic proof of the first Borwein conjecture using the saddle point method and a formula discovered by Andrews [\cite{GA}, Theorem 4.1] for the polynomials $A_n(q), B_n(q)$ and $C_n(q)$. It is not clear whether his method  can be applied to other conjectures.  Even for the first Borwein conjecture, a combinatorial or algebraic proof would be  very interesting.

Instead of evaluating $a_i$ directly, it is natural to consider the sums  of the coefficients of polynomials in the Borwein conjectures over arithmetic progressions. Let $d$ be a positive integer divisible by $p$ and let $b$ be an integer. If we define $$
S_{d,b}:=\sum_{i\equiv b \mmod d}a_i,$$
then the positivity (negativity) part of the Borwein conjectures follows from the positivity (negativity, respectively) of $S_{d, b}$ for sufficiently large $d$, for example, $d> sn^2(p-1)p/2$. Please note that here $S_{d,b}$ should be $S_{p, s,n, d,b}$. For notational simplicity, the subscripts $p$, $s$ and $n$ are omitted when there is no confusion.

Using estimates of exponential sums, Zaharescu \cite{AZ} first studied $S_{d, b}$ for a large class of $d$. Let us denote by $[x]$ and $\{x\}$ the usual integer part and fractional part of a real number $x$, respectively. For an integer $b$, define $v(b)=p-1$ if $b$ is divisible by $p$ and $v(b)=-1$ otherwise. Zaharescu proved the following result.
\begin{theorem}[Zaharescu]
Let $s,n$ be positive integers, let $p,q$ be two distinct odd primes with $q\leq n$, and let $b$ be an integer number. Then
	$$
	\Big|S_{pq,b}-\frac{v(b)p^{sn}}{pq}\Big|\leq \frac{(p-1)(q-1)p^{s[n/q]-1}2^{sq(p-1)\{n/q\}}}{q}.
	$$
\end{theorem}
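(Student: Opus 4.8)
The plan is a roots-of-unity filter. Write $P(x):=\prod_{1\le m\le pn,\ p\nmid m}(1-x^m)^s=\sum_i a_ix^i$ — this is the polynomial of \eqref{eq:P} with the variable renamed $x$ — and put $\zeta=e^{2\pi i/(pq)}$. Orthogonality of the additive characters of $\Z/pq\Z$ gives
$$S_{pq,b}=\frac{1}{pq}\sum_{j=0}^{pq-1}\zeta^{-bj}P(\zeta^j).$$
Since $p\ne q$, the order of $\zeta^j$ is one of $1,q,p,pq$ according as $\gcd(j,pq)$ is $pq,p,q,1$, and there are exactly $\phi(pq/g)$ indices $j$ with $\gcd(j,pq)=g$; I would split the sum along these four classes. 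The single cyclotomic input I would isolate at the start is: if $\xi$ is a primitive $M$-th root of unity and $p\mid M$, then $\prod_{1\le r\le M-1,\ p\nmid r}(1-\xi^r)=p$ — this follows from $\prod_{r=1}^{M-1}(1-\xi^r)=M$ together with the fact that the sub-product over the multiples of $p$ equals $M/p$, because $\xi^p$ is a primitive $(M/p)$-th root of unity.

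Then I would handle the four classes as follows. If $j=0$ then $P(1)=0$. If $\zeta^j$ has order $q$, then $P(\zeta^j)=0$, because the hypothesis $q\le n$ (together with $p\nmid q$) guarantees that $m=q$ occurs among the indices $1\le m\le pn$ with $p\nmid m$, and it contributes the zero factor $1-\zeta^{jq}$. If $\zeta^j=\omega^c$ has order $p$ (so $j=qc$ with $1\le c\le p-1$ and $\omega=e^{2\pi i/p}$), then grouping the indices into the $n$ blocks $\{tp+1,\dots,tp+p-1\}$ shows that each block contributes $\prod_{r=1}^{p-1}(1-\omega^{cr})=\prod_{r=1}^{p-1}(1-\omega^{r})=p$, so $P(\zeta^j)=p^{sn}$; since $\zeta^{-bj}=\omega^{-bc}$, summing over $c$ yields $\frac{1}{pq}\,p^{sn}\sum_{c=1}^{p-1}\omega^{-bc}=\frac{v(b)p^{sn}}{pq}$, which is precisely the main term. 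Finally, if $\zeta^j=\eta$ has order $pq$, then $P(\eta)\ne 0$ (no index $m\le pn$ with $p\nmid m$ is a multiple of $pq$); writing $n=q[n/q]+n_1$ with $n_1=q\{n/q\}\in\{0,\dots,q-1\}$, I would cut the index range $1\le m\le pn$, $p\nmid m$, into $[n/q]$ consecutive windows of $pq$ integers followed by a tail of $pn_1$ integers. By periodicity and the cyclotomic fact (with $M=pq$) each full window contributes exactly $p$, while the tail is a product of $(p-1)n_1$ factors each of modulus $\le 2$, hence of modulus $\le 2^{(p-1)n_1}$. Therefore $|P(\eta)|\le p^{s[n/q]}2^{sq(p-1)\{n/q\}}$.

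Putting the pieces together, the $j=0$ term and the order-$q$ terms vanish, the order-$p$ terms give the main term $v(b)p^{sn}/(pq)$, and the remaining $\phi(pq)=(p-1)(q-1)$ terms — those with $\zeta^j$ a primitive $pq$-th root of unity — satisfy
$$\Big|S_{pq,b}-\frac{v(b)p^{sn}}{pq}\Big|=\frac{1}{pq}\Big|\sum_{\gcd(j,pq)=1}\zeta^{-bj}P(\zeta^j)\Big|\le\frac{(p-1)(q-1)}{pq}\max_{\gcd(j,pq)=1}|P(\zeta^j)|\le\frac{(p-1)(q-1)p^{s[n/q]-1}2^{sq(p-1)\{n/q\}}}{q},$$
which is the asserted estimate. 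I expect the real work to sit in the order-$pq$ case: the reason the bound comes out as small as it does is that a full period of $pq$ consecutive indices, with the multiples of $p$ deleted, always evaluates to the prime $p$ at a primitive $pq$-th root of unity; once that is established the leftover tail — whose length $pn_1$ is dictated by $n\bmod q$ — is estimated trivially and is what produces the factor $2^{sq(p-1)\{n/q\}}$. The vanishing at the order-$q$ roots and the exact value $p^{sn}$ at the order-$p$ roots are comparatively routine once $P$ is written in the product form $\prod_{p\nmid m}(1-x^m)^s$, though it is worth noting that $q\le n$ is exactly what forces the former.
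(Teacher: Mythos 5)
Your proof is correct: the orthogonality filter, the vanishing at $1$ and at the order-$q$ roots (which is exactly where $q\le n$ enters), the evaluation $P(\zeta^j)=p^{sn}$ at the order-$p$ roots giving the main term $v(b)p^{sn}/(pq)$, and the window/tail estimate $|P(\eta)|\le p^{s[n/q]}2^{sq(p-1)\{n/q\}}$ at primitive $pq$-th roots all check out and reproduce the stated bound exactly. Note that this paper does not prove the theorem but only quotes it from Zaharescu \cite{AZ}; your argument is precisely the exponential-sum (roots-of-unity) approach attributed to that source, so it is essentially the intended proof rather than a new route.
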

For instance, when $(p,s)=(3,1)$, Zaharescu's bound gives
$$
\Big|S_{3q,b}-\frac{2\cdot 3^n}{3q}\Big|\leq \frac{2(q-1)3^{[n/q]-1}2^{2q \{n/q\}}}{q}
$$
for $b$ divisible by $3$. Note that to ensure that this bound is nontrivial, $q$ must be a prime no greater than $n$.


For the case $(p,s)=(3,1)$, Li \cite{Li} removed the condition that $q$ is a prime and in fact obtained an estimate with a very small error bound.
He showed that
\begin{theorem}[Li]
	Fix $p=3$, $s=1$.  Let $n,b $ be integers with $n\geq 1$. Then
	$$ \left|S_{3n, b}-\frac{v(b) 3^n}{3n}\right|\leq   2^{n}.$$
\end{theorem}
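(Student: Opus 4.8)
The plan is to use a roots-of-unity filter. Write $P(q) = (q;q)_{3n}/(q^3;q^3)_n = \prod_{1 \le m \le 3n,\, 3\nmid m}(1-q^m) = \sum_i a_i q^i$ and let $\zeta = e^{2\pi i/(3n)}$, a primitive $3n$-th root of unity. Then
\[
S_{3n,b} = \frac{1}{3n}\sum_{t=0}^{3n-1}\zeta^{-tb}P(\zeta^t).
\]
I will isolate the terms for which $\zeta^t$ is a primitive cube root of unity, namely $t = n$ and $t = 2n$; these produce the main term $v(b)3^n/(3n)$, and all remaining terms will be shown to be small.

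For the main term: if $\zeta^t = \zeta_3^{\,u}$ with $\zeta_3 = e^{2\pi i/3}$ and $u \in \{1,2\}$, then as $m$ runs over the integers in $[1,3n]$ not divisible by $3$, the residue $um \bmod 3$ takes each value in $\{1,2\}$ exactly $n$ times, so $P(\zeta_3^{\,u}) = \big((1-\zeta_3)(1-\zeta_3^2)\big)^n = 3^n$. Hence the contribution of $t = n$ and $t = 2n$ is $\frac{3^n}{3n}(\zeta_3^{-b} + \zeta_3^{-2b}) = \frac{v(b)3^n}{3n}$, using $\zeta_3^{-b}+\zeta_3^{-2b} = v(b)$.

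The crux is to evaluate $P(\zeta^t)$ for the other $t$. Put $e = \gcd(t,3n)$, so $\zeta^t$ has order $r = 3n/e$. Since $P(\omega) = 0$ exactly when $\omega^m = 1$ for some $m \in [1,3n]$ with $3 \nmid m$, a short check shows $P(\zeta^t) \ne 0$ if and only if $3 \mid r$, which is the same as $e \mid n$. Assuming $e \mid n$, group the factors of $P(\zeta^t)$ by the residue of $m$ modulo $3$: with $\mu = (\zeta^t)^3$, of order $r/3 = n/e$, this gives $P(\zeta^t) = \prod_{j=1}^{2}\prod_{k=0}^{n-1}(1 - (\zeta^t)^j\mu^k)$. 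Because $n/e \mid n$, each inner product runs over exactly $e$ complete $\mu$-cycles; using $\prod_{k=0}^{(n/e)-1}(1 - y\mu^k) = 1 - y^{n/e}$ and the fact that $(\zeta^t)^{n/e}$ is a primitive cube root of unity, this collapses to $P(\zeta^t) = \big(\prod_{j=1}^2(1 - (\zeta^t)^{jn/e})\big)^e = 3^{e}$. Thus $P(\zeta^t) = 3^{\gcd(t,3n)}$ when $\gcd(t,3n) \mid n$ and $P(\zeta^t) = 0$ otherwise. I expect this exact evaluation — in particular the cycle bookkeeping and the divisibility conditions — to be the main obstacle.

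It remains to assemble the estimate. Since the main term is precisely the $e = n$ contribution,
\[
\Big|S_{3n,b} - \frac{v(b)3^n}{3n}\Big| \le \frac{1}{3n}\sum_{\substack{e \mid n\\ e < n}}\phi(3n/e)\,3^{e}.
\]
Every proper divisor $e$ of $n$ satisfies $e \le n/2$, so $3^e \le 3^{n/2}$; and reindexing $e \mapsto n/e$ gives $\sum_{e \mid n}\phi(3n/e) = \sum_{h \mid n}\phi(3h) \le 3\sum_{h \mid n}\phi(h) = 3n$. Hence the right-hand side is at most $3^{n/2} \le 2^n$, which proves the theorem — in fact with the sharper bound $3^{n/2}$.
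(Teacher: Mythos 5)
Your proof is correct, and it takes a genuinely different route from the paper. The statement is quoted in the paper from Li's article and is not proved there; the closest argument in the paper (the proof of its main theorem, and likewise the Goswami--Pantangi generalization it cites) goes through a reformulation as a signed subset-sum count $N_D(b)$ over $\Z/2pn\Z$, the Li--Wan sieve to evaluate the character sums $S_m(\chi)$ via generating functions, and Lemma \ref{lem:d} to estimate the resulting cyclotomic products. You instead apply the orthogonality filter directly to the polynomial $P(q)=\prod_{m\le 3n,\,3\nmid m}(1-q^m)$ and evaluate $P$ exactly at every $3n$-th root of unity: your key computation $P(\zeta^t)=3^{\gcd(t,3n)}$ when $\gcd(t,3n)\mid n$ and $P(\zeta^t)=0$ otherwise is correct (the grouping $m=3k+j$, the identity $\prod_{k=0}^{d-1}(1-y\mu^k)=1-y^d$, and the fact that $(\zeta^t)^{n/e}$ has order $3$ all check out), and so is the divisor bookkeeping $\sum_{h\mid n}\phi(3h)\le 3n$ together with $e\le n/2$ for proper divisors. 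This is more elementary than the sieve machinery: it avoids the reduction of Section \ref{sec:reduction} entirely, exploits that for $s=1$ (indeed for any $s$, since the polynomial is a perfect $s$-th power) the evaluations at roots of unity can be computed in closed form, and it even yields the sharper bound $3^{n/2}\le 2^n$, matching the error term $p^{sn/2}$ of the paper's general theorem in this special case. What the paper's sieve approach buys, by contrast, is a uniform framework in which the same character-sum formula emerges from counting distinct-coordinate tuples, which is the formulation the authors push to the larger modulus $2pn$ and general $(p,s)$.
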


Goswami and Pantangi \cite{GP} generalized this bound to general cases $(p, s)$ and $d=pn$ following Li's argument and Li--Wan's sieving argument. They proved the following result.
\begin{theorem}[Goswami and Pantangi]
	Let $p$ be an odd prime, and let  $n, b$ be integers with $n\geq 1$.  Then
	$$
	\Big|S_{pn, b}-\frac{v(b) p^{sn}} {pn}\Big|\leq p^{sn/2}.
	$$
\end{theorem}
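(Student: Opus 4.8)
The plan is to use the roots‑of‑unity filter. Write $P(q)$ for the polynomial in \eqref{eq:P}, whose coefficients are the $a_i$; note that $P(q)=\prod_{1\le m\le pn,\ p\nmid m}(1-q^{m})^{s}$. Put $\zeta=e^{2\pi i/(pn)}$, so that
\[
S_{pn,b}=\frac{1}{pn}\sum_{t=0}^{pn-1}\zeta^{-bt}P(\zeta^{t}).
\]
The first step is a vanishing observation. If $\eta$ is a $pn$‑th root of unity whose order $e$ is not divisible by $p$, then $e\mid pn$ together with $\gcd(e,p)=1$ forces $e\mid n$, hence $e\le pn$, so $m=e$ lies in the index set of $P$ and the factor $(1-\eta^{e})^{s}$ vanishes; thus $P(\eta)=0$. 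If instead $p\mid e$, then no $m$ in the index set is a multiple of $e$ (each such $m$ being coprime to $p$), so every factor of $P(\eta)$ is nonzero. Hence only the $t$ with $p\mid\ord(\zeta^{t})$ contribute, and I will group them by $e=\ord(\zeta^{t})$, a divisor of $pn$ divisible by $p$.

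The heart of the matter is an exact evaluation of $P(\eta)$ when $\ord(\eta)=e$ with $p\mid e$ (so $e\mid pn$). Since $\eta^{e}=1$, partition the index set $\{1\le m\le pn:\,p\nmid m\}$ according to the $pn/e$ consecutive blocks $\{le+1,\dots,(l+1)e\}$, $0\le l<pn/e$. Within a block one has $\eta^{le+r}=\eta^{r}$, and $p\mid(le+r)$ exactly when $p\mid r$ because $p\mid e$, so each block contributes the same factor $f(\eta)^{s}$ with
\[
f(\eta)=\prod_{\substack{1\le r\le e\\ p\nmid r}}(1-\eta^{r}).
\]
To evaluate $f(\eta)$ one uses the classical identity $\prod_{r=1}^{N-1}(1-\omega^{r})=N$ for a primitive $N$‑th root of unity $\omega$: applying it with $N=e$, and again with $N=e/p$ to the primitive $(e/p)$‑th root $\eta^{p}$ so as to evaluate the suppressed factors $\prod_{j=1}^{e/p-1}(1-\eta^{pj})$, one obtains $f(\eta)=e/(e/p)=p$. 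Therefore $P(\eta)=p^{spn/e}$ for every root of unity $\eta$ of order $e$ with $p\mid e$. This uniform formula is the step requiring real care; the rest is bookkeeping.

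The main term comes from the roots of order exactly $p$. Since $\ord(\zeta^{t})=p\iff\gcd(t,pn)=n$, these are $\zeta^{nj}=e^{2\pi ij/p}$ with $j=1,\dots,p-1$, and at each of them $P$ equals $p^{spn/p}=p^{sn}$, so their combined contribution to $S_{pn,b}$ is
\[
\frac{p^{sn}}{pn}\sum_{j=1}^{p-1}e^{-2\pi ibj/p}=\frac{v(b)\,p^{sn}}{pn}.
\]
Every other relevant order $e$ satisfies $p\mid e$ and $e\ne p$, so $e/p$ is a divisor of $n$ with $e/p\ge 2$; hence $e\ge 2p$, $pn/e\le n/2$, and $P(\eta)=p^{spn/e}\le p^{sn/2}$ at each of the $\phi(e)$ roots of order $e$. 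Using $\sum_{e\mid pn}\phi(e)=pn$, whence $\sum_{e\mid pn,\ p\mid e,\ e\ne p}\phi(e)\le pn-\phi(p)=pn-p+1$, we conclude
\[
\Bigl|S_{pn,b}-\frac{v(b)p^{sn}}{pn}\Bigr|
\le\frac{1}{pn}\sum_{\substack{e\mid pn,\ p\mid e\\ e\ne p}}\phi(e)\,p^{spn/e}
\le\frac{(pn-p+1)\,p^{sn/2}}{pn}<p^{sn/2},
\]
which is the asserted bound. The one delicate point is the exact evaluation $P(\eta)=p^{spn/e}$ at roots of unity whose order is a multiple of $p$; the extraction of the main term and the final estimate are then routine, and the same scheme with $\zeta=e^{2\pi i/(2pn)}$ yields the sharper statement over progressions modulo $2pn$.
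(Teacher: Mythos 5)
Your proof is correct, and it takes a genuinely different (and more elementary) route than the one used in the literature this paper builds on. Li, Goswami--Pantangi, and the present paper all first recenter the product as a Laurent polynomial supported on the structured set $D=\{pj-k\}$, reduce to a subset-sum counting problem over $\Z/pn\Z$ (here $\Z/2pn\Z$), and then evaluate the resulting character sums with the Li--Wan distinct-coordinate sieve, finally needing an explicit modulus-and-argument evaluation of $\prod_{k=1}^{(p-1)/2}(1-\overline{\chi}(k))$ (Lemma \ref{lem:d}) to extract the main term. You instead apply the roots-of-unity filter directly to $P(q)=\prod_{1\le m\le pn,\ p\nmid m}(1-q^m)^s$ and compute $P(\eta)$ exactly at every $pn$-th root of unity: $P(\eta)=0$ when $p\nmid\ord(\eta)$, and $P(\eta)=p^{spn/e}$ when $p\mid e=\ord(\eta)$, via the block decomposition of $\{1,\dots,pn\}$ into $pn/e$ intervals of length $e$ and two applications of $\prod_{r=1}^{N-1}(1-\omega^r)=N$. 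Because these values are exact positive reals, you need no sieve machinery and no phase bookkeeping, and you even get the slightly sharper constant $\frac{pn-p+1}{pn}p^{sn/2}<p^{sn/2}$. One caution about your closing sentence: the same scheme does \emph{not} transfer verbatim to modulus $2pn$, since a $2pn$-th root of unity can have order $e\mid 2pn$ with $e\nmid pn$, in which case the length-$e$ blocks no longer tile $\{1,\dots,pn\}$ and $P(\eta)$ is in general neither $p^{spn/e}$ nor a positive real (e.g.\ $p=3$, $n=s=1$, $\eta=e^{\pi i/3}$ gives $|P(\eta)|=\sqrt 3$ with a nontrivial argument); this is exactly where the paper's phase computation in Lemma \ref{lem:d} and the geometric-series step become necessary. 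That caveat does not affect the statement you were asked to prove, whose proof is complete as written.
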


In this paper, we improve Goswami and Pantangi's result to arithmetic progressions with a larger common difference of $2pn$. Our main result is the following:
\begin{theorem}\label{thm:main}
     Let $p$ be an odd prime, and let $n, b$ be integers with $n\geq 1$. Then
	$$\Big|S_{2pn, b}-\frac{v(b)p^{sn}}{2pn}\Big|\leq p^{sn/2}.$$
\end{theorem}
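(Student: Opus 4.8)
The plan is to run a roots-of-unity filter and evaluate (or at least bound) the polynomial $P(q):=\left(\frac{(q;q)_{pn}}{(q^p;q^p)_n}\right)^{s}$ at every $2pn$-th root of unity. Put $m=2pn$ and $\zeta=e^{2\pi i/m}$, so that
$$S_{2pn,b}=\frac1m\sum_{t=0}^{m-1}\zeta^{-bt}P(\zeta^t).$$
The starting observation is the elementary identity $\frac{(q;q)_{pn}}{(q^p;q^p)_n}=\prod_{1\le k\le pn,\ p\nmid k}(1-q^k)$, which gives $P(\zeta^t)=R_t^{\,s}$ with $R_t:=\prod_{1\le k\le pn,\ p\nmid k}(1-\zeta^{tk})$. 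Everything then reduces to understanding $R_t$ as a function of $d:=\gcd(t,m)$, equivalently of the order $\ell:=m/d$ of $\zeta^t$.

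I would first isolate the terms that vanish. The term $t=0$ gives $P(1)=0$. If $p\nmid\ell$, then $\ell$ divides the largest divisor of $m$ coprime to $p$, hence $\ell\le 2n<pn$ (using $p\ge 3$), so $k=\ell$ is an admissible index with $\zeta^{t\ell}=1$ and $R_t=0$; conversely, if $p\mid\ell$ then no admissible $k$ is a multiple of $\ell$, so $R_t\neq 0$. Since $p\mid\ell$ is equivalent to $d\mid 2n$, the surviving terms are exactly those with $d\mid 2n$; for these write $e:=2n/d$, so $\ell=pe$. The main term comes from $e=1$, i.e.\ $d=2n$: the relevant indices are $t=2nu$ with $u=1,\dots,p-1$, for which $\zeta^t=e^{2\pi iu/p}$ is a primitive $p$-th root of unity, and since each residue $1,\dots,p-1$ is attained exactly $n$ times as $k$ runs over $1\le k\le pn$ with $p\nmid k$, one gets $R_t=\big(\prod_{a=1}^{p-1}(1-e^{2\pi iua/p})\big)^{n}=p^{n}$ from $\Phi_p(1)=p$. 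Their combined contribution is $\frac{p^{sn}}{m}\sum_{u=1}^{p-1}e^{-2\pi ibu/p}=\frac{v(b)p^{sn}}{2pn}$, exactly the claimed main term.

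For $e\ge 2$ the claim is $|R_t|=p^{n/e}$, hence $|P(\zeta^t)|=p^{sn/e}\le p^{sn/2}$. When $e\mid n$ this is immediate: the list $k=1,\dots,pn$ consists of $n/e$ complete residue systems modulo $\ell=pe$, so $R_t=\big(\prod_{r\bmod pe,\ p\nmid r}(1-\xi^r)\big)^{n/e}=p^{n/e}$, where $\xi=\zeta^t$ and $\prod_{r\bmod pe,\ p\nmid r}(1-\xi^r)=\frac{x^{pe}-1}{x^{e}-1}\big|_{x=1}=p$. When $e\nmid n$, one checks $e$ must be even, $e=2f$ with $f\mid n$ and $n/f$ odd; then $\ell=2pf$, $\xi^{pf}=-1$, and $\{1,\dots,pn\}$ is $(n/f-1)/2$ complete periods modulo $\ell$ followed by the half-period $\{1,\dots,pf\}$, so $R_t=p^{(n/f-1)/2}\prod_{1\le r\le pf,\ p\nmid r}(1-\xi^r)$. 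The one genuinely delicate point, which I expect to be the main obstacle, is to show $\big|\prod_{1\le r\le pf,\ p\nmid r}(1-\xi^r)\big|^{2}=p$; I would write this squared modulus as $\prod_{1\le r\le pf,\ p\nmid r}(1-\xi^r)(1-\xi^{-r})$ and reindex $r\mapsto 2pf-r$ in the second factor, turning it into $\prod_{pf<r<2pf,\ p\nmid r}(1-\xi^r)$, so that merging the two ranges (the missing index $r=pf$ is a multiple of $p$ anyway) restores the full product $\prod_{r\bmod 2pf,\ p\nmid r}(1-\xi^r)=p$. This yields $|R_t|=p^{n/(2f)}=p^{n/e}$, completing the claim.

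Finally I would assemble the estimate. For each divisor $e\ge 2$ of $2n$ there are exactly $\phi(pe)$ indices $t$ with $\gcd(t,m)=2n/e$, each contributing at most $p^{sn/e}$ in modulus, so
$$\Big|S_{2pn,b}-\frac{v(b)p^{sn}}{2pn}\Big|\le\frac1{2pn}\sum_{e\mid 2n,\ e\ge 2}\phi(pe)\,p^{sn/e}\le\frac{p^{sn/2}}{2pn}\sum_{e\mid 2n,\ e\ge 2}\phi(pe),$$
using $p^{sn/e}\le p^{sn/2}$ for $e\ge 2$. A short multiplicative computation (splitting off the exact power of $p$ dividing $2n$) gives $\sum_{e\mid 2n}\phi(pe)=2pn-(2n)_{p'}$, where $(2n)_{p'}$ is the largest divisor of $2n$ coprime to $p$; subtracting the $e=1$ term $\phi(p)=p-1$ yields $\sum_{e\mid 2n,\ e\ge 2}\phi(pe)=2pn-(2n)_{p'}-(p-1)<2pn$, and Theorem~\ref{thm:main} follows (indeed with a little to spare). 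Apart from the half-period modulus identity in the case $e\nmid n$, the argument is bookkeeping with cyclotomic polynomials and elementary divisor sums.
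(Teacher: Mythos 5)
Your proposal is correct, and it reaches the theorem by a genuinely more direct route than the paper. Both arguments are at bottom Fourier analysis on $\Z/2pn\Z$ with the same structural split (characters/roots of unity of order not divisible by $p$ contribute $0$, those of order exactly $p$ give the main term $v(b)p^{sn}/(2pn)$, the rest are bounded by $p^{sn/2}$), but the execution differs. The paper first symmetrizes the product into a Laurent polynomial with exponent set $D=\{pj-k\}$, recasts $S_{2pn,b}$ as an alternating count $N_D(b)$ of a subset-sum type quantity, computes the resulting character sums $S_m(\chi)$ with the Li--Wan sieve and the generating-function Lemmas \ref{lem:a}--\ref{lem:b}, and then needs Lemma \ref{lem:d} to pin down the modulus $\sqrt p$ of $\prod_{k=1}^{(p-1)/2}(1-\omega^k)$ at $p$-th roots of unity; the symmetrization is what makes every character sum collapse to a sum over a union of cosets of $pG$. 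You bypass all of this: you apply the roots-of-unity filter directly to $P(q)=\prod_{1\le k\le pn,\,p\nmid k}(1-q^k)^s$ and evaluate $R_t$ at each $2pn$-th root of unity by elementary cyclotomic factorizations ($\prod_{r\bmod pe,\,p\nmid r}(1-\xi^r)=p$ for complete periods), with the only delicate case being $e\nmid n$, where your half-period pairing $r\mapsto 2pf-r$ plays exactly the role that Lemma \ref{lem:d} plays in the paper; your verification of that identity is complete and correct. Your bookkeeping of the error is also slightly finer than the paper's: summing $\phi(pe)p^{sn/e}$ over $e\mid 2n$, $e\ge 2$ and using $\sum_{e\mid 2n}\phi(pe)=2pn-(2n)_{p'}$ gives a strict inequality $<p^{sn/2}$ (and in fact a bound decaying with the divisor structure of $2n$), whereas the paper bounds the non-principal contribution by a single maximal term of size $p^{sn/2}$. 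What your approach gives up is the subset-sum interpretation of the coefficients, which is the paper's motivating viewpoint and the reason for invoking the Li--Wan sieve; what it buys is a shorter, self-contained proof using only orthogonality and cyclotomic identities, together with the marginally sharper error estimate.
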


\begin{corollary}
  For an odd prime $p$ and an integer $b$, define
  $$ N_{p,s,b}:=\begin{cases}
  \inf \{n\in \N: (p-1)p^{sn/2-1}>2n\},& \text{if}\ p\mid b;\\
  \inf\{n\in\N: p^{sn/2-1}>2n\},& \text{otherwise}.
  \end{cases}$$
  Then for all $n\geq N_{p,s,b}$, we have 
  $$
  S_{2pn, b}\begin{cases}
  >0, & \text{if}\ p\mid b;\\
  <0, & \text{otherwise}.
  \end{cases}
  $$
\end{corollary}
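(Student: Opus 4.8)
The plan is to read off the sign of $S_{2pn,b}$ directly from Theorem~\ref{thm:main}. Rewriting the bound there as
$$\frac{v(b)p^{sn}}{2pn}-p^{sn/2}\ \leq\ S_{2pn,b}\ \leq\ \frac{v(b)p^{sn}}{2pn}+p^{sn/2},$$
I would split into the two cases of the definition of $v(b)$. If $p\mid b$, then $v(b)=p-1>0$, and the left-hand inequality gives $S_{2pn,b}>0$ whenever $(p-1)p^{sn}/(2pn)>p^{sn/2}$; dividing through by $p^{sn/2}$ and by $p$, this is precisely the inequality $(p-1)p^{sn/2-1}>2n$ appearing in the definition of $N_{p,s,b}$. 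If $p\nmid b$, then $v(b)=-1<0$, and the right-hand inequality gives $S_{2pn,b}<0$ whenever $p^{sn}/(2pn)>p^{sn/2}$, i.e. whenever $p^{sn/2-1}>2n$, which is again the defining inequality for $N_{p,s,b}$. In particular the asserted sign holds at $n=N_{p,s,b}$.

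The one remaining point is to check that each of these threshold inequalities, once valid at $n_0=N_{p,s,b}$, stays valid for all larger $n$. Put $g(n)=(p-1)p^{sn/2-1}/(2n)$ in the first case and $g(n)=p^{sn/2-1}/(2n)$ in the second; in both cases $g(n+1)/g(n)=p^{s/2}\cdot n/(n+1)$. Since $p\geq 3$ and $s\geq 1$, the only pair $(p,s)$ with $p^{s/2}<2$ is $(3,1)$. When $p^{s/2}\geq 2$ we get $g(n+1)/g(n)\geq 2n/(n+1)\geq 1$ for every $n\geq 1$, and the inequality is strict because $p^{s/2}\neq 2$; thus $g$ is strictly increasing on the positive integers, and $g(n_0)>1$ forces $g(n)>1$ for all $n\geq n_0$. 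When $(p,s)=(3,1)$ one checks by hand that $n=1$ satisfies neither $2\cdot 3^{n/2-1}>2n$ nor $3^{n/2-1}>2n$, so $N_{3,1,b}\geq 2$, while for $n\geq 2$ we have $g(n+1)/g(n)=\sqrt3\cdot n/(n+1)\geq 2\sqrt3/3>1$, so $g$ is strictly increasing on $\{2,3,\dots\}$ and the same conclusion holds. Combining the two cases gives the corollary.

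There is no serious obstacle here: the corollary is essentially a direct consequence of Theorem~\ref{thm:main} together with the triangle inequality. The only thing that needs a moment's attention is the monotonicity/persistence of the threshold inequalities for small $n$, which is what forces the harmless separate treatment of the case $(p,s)=(3,1)$ above.
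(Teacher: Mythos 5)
Your derivation is correct and is exactly the intended one: the paper states this corollary without proof as an immediate consequence of Theorem~\ref{thm:main}, obtained by comparing the main term $v(b)p^{sn}/(2pn)$ with the error bound $p^{sn/2}$, which is precisely your case split on $v(b)$. Your additional check that the threshold inequality, once satisfied, persists for all larger $n$ (including the separate treatment of $(p,s)=(3,1)$) addresses the ``for all $n\geq N_{p,s,b}$'' clause that the paper leaves implicit, and it is carried out correctly.
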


The main idea of this paper is to shift the problem to a subset-sum type problem (see Section \ref{sec:reduction}), which can be handled by the Li--Wan sieve. We remark that both Goswami and Pantangi's argument and ours rely on the algebraic structures of certain sets in the additive groups $\Z/pn\Z$  and $\Z/2pn\Z$, respectively. When $d$ becomes larger than $2pn$, the argument does not work due to the lack of the algebraic structure. To give a reasonable bound of $S_{b,d}$ for large $d$, one has to obtain non-trivial estimates for exponential sums over a small subset of the additive group $\Z/d\Z$, which in general is a difficult problem in number theory. 


This paper is organized as follows. In Section 2, we reduce the problem to a subset-sum type problem. The Li--Wave sieve is introduced in Section 3 and some useful lemmas are presented in Section 4. Finally, in Section 5, we prove our main result.

{\bf Notation.} The congruence notion $a\equiv b\ {\rm mod}\ n$ means $a-b$ is divisible by $n$. We use  $|E|$ to denote the cardinality of the set $E$. If $F(x)=\sum_{n=0}^\infty a_nx^n$ is a formal power series, then we use $[x^n]F(x)=a_n$ to denote the coefficient of $x^n$ in $F(x)$. If $S$ is a statement, we use $1_S$ to denote the indicator function of $S$, thus $1_S = 1$ when $S$ is true and $1_S$ = 0 when $S$ is false.

\section{Reduction to a subset-sum type  problem}\label{sec:reduction}

As in \cite{Li} and \cite{GP}, we first reduce the problem to a subset-sum type  problem over the additive group of integers modulo $2pn$. The starting point is the equality $(1-q^j)=-q^j(1-q^{-j})$, allowing us to write  \eqref{eq:P} as
\begin{align}\label{eq:PP}
\Big(\frac{(q;q)_{pn}}{(q^p;q^p)_n}\Big)^s=\quad(-1)^{sn(p-1)/2}q^{e}\prod_{j=-(n-1)}^n\prod_{k=1}^{(p-1)/2}(1-q^{pj-k})^s.
\end{align}
where $e=e_{p,s,n}=sn(p-1)(2pn+1-p)/8$. If we denote by $b_{i}$ the coefficient $q^i$ in the Laurent polynomial
$$
b_i=[q^i]\prod_{j=-(n-1)}^n\prod_{k=1}^{(p-1)/2}(1-q^{pj-k})^s,$$
then we deduce from \eqref{eq:PP} that $a_{i}=(-1)^{sn(p-1)/2} b_{i-e}$. In particular, we have
\begin{align}\label{eq:ab}
S_{2pn,b}=\sum_{i\equiv b\ {\rm mod}\ {2pn}} a_{i}= (-1)^{sn(p-1)/2} \sum_{i\equiv b-e\ {\rm mod}\ {2pn}} b_{i}.
\end{align}

On the other hand, let $D$ denote the set 
\begin{align}\label{eq:D}
D=\{pj-k:-(n-1)\leq j\leq n,\ 1\leq k\leq \frac{p-1}{2}\}.
\end{align}
Given integers $0\leq m_i\leq |D|$, $1\leq i\leq s$ and $b$,  define the quantity $N_D(m_1,m_2,\dots,m_s,b)$ as
$$
N_{D}(m_1,m_2,\dots,m_s,b):=|\{(V_1,V_2,\dots,V_s): V_i\subset D,\ |V_i|=k_i,\ 1\leq i\leq s,\ \sum_{i=1}^s\sum_{x\in V_i}x\equiv b\ {\rm mod}\ 2pn\}|.
$$
That is, $N_{D}(m_1,m_2,\dots,m_s,b)$ is the number of ordered $s$-tuples of subsets of $D$ with cardinalities $m_i$, $1\leq i\leq s$ whose elements sum to $b$. The well-known subset-sum problem is to count the number of subsets with prescribed cardinality whose elements sum to a given element. Thus the problem of counting $N_{D}(m_1,m_2,\dots,m_s,b)$ can be viewed as a generation of the subset-sum problem and it is the usual subset-sum problem when $s=1$.
  To study the parity property of $N_D(m_1,m_2,\dots,m_s,b)$,  we define $N_D(b)$ to be their alternating sum
\begin{align}\label{eq:ND}
N_D(b):=\sum_{0\leq m_i,\dots,m_s\leq |D|}(-1)^{\sum_{i=1}^s m_i}N_{D}(m_1,m_2,\dots,m_s,b).
\end{align}

Clearly, we have $N_D(b)=\sum_{i\equiv b\ {\rm mod}\ {2pn}} b_i$, which together \eqref{eq:ab} leads to
\begin{align}\label{eq:Sb}
S_{2pn,b}= (-1)^{sn(p-1)/2}N_D(b-e).
\end{align}
The above equality establishes a connection between $S_{2pn,b}$ and $N_D(b)$, and the main problem is then reduced to counting $N_{D}(b)$ or more precisely to counting $N_D(m_1,m_2,\dots,m_s,b)$, which is  a subset-sum type problem over the additive group of integers modulo $2pn$ that can be computed by the sieve formula of Li and Wan. 

\section{The Li--Wan sieve}

For the purpose of the proof, we briefly introduce  the Li--Wan sieve \cite{Li2}. Suppose that $m$ is a positive integer and  $A$ is a finite set. Let $A^m$ be the Cartesian product of $m$ copies $A$. For subset $X$  of $A^m$, let $\overline{X}$ be the set of elements in $X$ with distinct coordinates
$$
\overline{X}=\{(x_1,x_2,\dots,x_m)\in X:x_i\neq x_j,\ \forall\, i\neq j\}.
$$
A sieve formula discovered by Li and Wan \cite{Li2} gives an approach to computing $|\overline{X}|$  through a simpler way than the usual inclusion-exclusion sieve.  The new sieve formula
shows that there exists a large number of cancellations in the summation in the inclusion-exclusion principle. The number of terms in the summation is significantly reduced from $2^{{m \choose 2}}$ to $m!$.

 Let $S_m$ be the symmetric group on the set $\{1,2,\dots,m\}$. Given a permutation $\tau\in S_m$, we can write it a product disjoint cycles $\tau=C_1C_2\cdots C_{c(\tau)}$, where $c(\tau)$ denotes the number of disjoint cycles of $\tau$. We define the signature of $\tau$ to be $\sign(\tau)=(-1)^{m-c(\tau)}$. We also define the set $X_\tau$ to be
$$
X_\tau=\{(x_1,x_2,\dots,x_m)\in X:x_i\ \text{are equal for}\ i\in C_j, 1\leq j\leq c(\tau)\}.
$$
In other words, $X_\tau$ is the set of elements in $X$ that are invariant under the action of $\tau$ defined by $\tau \circ (x_1,x_2,\dots,x_m):=(x_{\tau(1)},x_{\tau(2)},\dots,x_{\tau(m)})$.  The Li--Wan sieve gives a formula for calculating sums over $\overline{X}$ via sums over $\overline{X}_\tau$.
\begin{theorem}[\cite{Li2}, Theorem 2.6]\label{thm:Li-Wan}
	Let $f:X\to \mathbb{C}$ be a complex-valued function defined over $X$. Then we have
	\begin{align*}
	\sum_{x\in\overline{X}} f(x)=\sum_{\tau\in S_m}\sign(\tau)\sum_{x\in X_\tau}f(x).
	\end{align*}
\end{theorem}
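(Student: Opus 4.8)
The plan is to prove the identity by expanding the right-hand side and interchanging the order of the two finite summations, so that the symmetric-group sum becomes an inner weight attached to each fixed point $x\in X$. First I would write
$$\sum_{\tau\in S_m}\sign(\tau)\sum_{x\in X_\tau}f(x)=\sum_{x\in X}f(x)\sum_{\substack{\tau\in S_m\\ x\in X_\tau}}\sign(\tau),$$
which is legitimate since all sums are finite. The whole argument then reduces to evaluating, for each fixed $x$, the weight $w(x):=\sum_{\tau:\,x\in X_\tau}\sign(\tau)$ and showing that $w(x)=1_{x\in\overline{X}}$; granting this, summing $f(x)w(x)$ over $x\in X$ immediately collapses to $\sum_{x\in\overline{X}}f(x)$.

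To analyze $w(x)$, I would attach to each $x=(x_1,\dots,x_m)\in X$ its \emph{equality partition} $P(x)$ of $\{1,2,\dots,m\}$, in which $i$ and $j$ lie in the same block exactly when $x_i=x_j$. By the definition of $X_\tau$, the condition $x\in X_\tau$ says precisely that $x$ is constant on each cycle of $\tau$; equivalently, every cycle of $\tau$ is contained in a single block of $P(x)$. This in turn is equivalent to $\tau$ belonging to the Young subgroup $\prod_{j}S_{B_j}$, where $B_1,\dots,B_r$ are the blocks of $P(x)$. Thus the permutations $\tau$ contributing to $w(x)$ are exactly those that permute coordinates within the blocks of $P(x)$.

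The key observation, and the real source of the cancellation promised by the sieve, is that $\sign(\tau)=(-1)^{m-c(\tau)}$ is nothing but the ordinary signature of $\tau$: a cycle of length $\ell$ has ordinary sign $(-1)^{\ell-1}$, and summing the exponents $\ell-1$ over the $c(\tau)$ cycles gives $m-c(\tau)$. Since the ordinary signature is multiplicative across the factors of a direct product, the weight factorizes as
$$w(x)=\sum_{\tau\in\prod_{j}S_{B_j}}\sign(\tau)=\prod_{j=1}^{r}\Big(\sum_{\sigma\in S_{|B_j|}}\sign(\sigma)\Big).$$
Now $\sum_{\sigma\in S_k}\sign(\sigma)$ equals $1$ when $k=1$ and $0$ when $k\geq 2$, because a symmetric group on at least two letters has equally many even and odd permutations. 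Hence any block of size at least $2$ forces $w(x)=0$, while $w(x)=1$ precisely when every block of $P(x)$ is a singleton, i.e. when the coordinates of $x$ are pairwise distinct, i.e. when $x\in\overline{X}$.

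Combining these facts yields
$$\sum_{\tau\in S_m}\sign(\tau)\sum_{x\in X_\tau}f(x)=\sum_{x\in X}f(x)\,w(x)=\sum_{x\in\overline{X}}f(x),$$
which is the claimed identity. The only genuinely delicate point is the combinatorial translation in the second step, namely recognizing that $x\in X_\tau$ is equivalent to membership of $\tau$ in the Young subgroup determined by $P(x)$, together with the factorization of the signature over the blocks; once these are in place, the vanishing of the alternating sum over each nontrivial symmetric factor does all the work. I expect no analytic difficulties, since every sum is finite and the identity is linear in $f$, holding term by term.
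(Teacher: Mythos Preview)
Your argument is correct: the interchange of sums, the identification of $\{\tau:x\in X_\tau\}$ with the Young subgroup $\prod_j S_{B_j}$ determined by the equality partition of $x$, the recognition that $\sign(\tau)=(-1)^{m-c(\tau)}$ is the ordinary signature, and the vanishing of $\sum_{\sigma\in S_k}\sign(\sigma)$ for $k\ge 2$ are all valid and combine to give the claimed identity.

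As for comparison: the paper does not actually prove this theorem; it is quoted as Theorem~2.6 of \cite{Li2} and used as a black box. So there is no in-paper proof to compare against. Your self-contained argument is essentially the standard way to establish the Li--Wan sieve and would serve perfectly well as a proof here.
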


A permutation $\tau\in S_m$ is said to be of type  $(c_1,c_2,\dots,c_m)$ if it has exactly $c_i$ cycles of length $i$, $1\leq i\leq m$. Let $N(c_1,c_2,\dots,c_m)$ denote the number of permutations of type $(c_1,\dots,c_m)$. By the orbit-stabilizer theorem, we have the well-known formula
\begin{align}\label{eq:N}
N(c_1,c_2,\dots,c_m)=\frac{m!}{1^{c_1}c_1!2^{c_2}c_2!\cdots m^{c_m}c_m!}.
\end{align}
If we define an $m$-variate polynomial $Z_m$ via
$$
Z_m(t_1,t_2,\dots,t_m)=\frac{1}{m!}\sum_{\sum ic_i=m}N(c_1,c_2,\dots,c_m)t_1^{c_1}t_2^{c_2}\cdots t_m^{c_m},$$
then it follows from \eqref{eq:N} that $Z_m$ satisfies the generating function
\begin{align}\label{eq:exp}
\sum_{m\geq 0}Z_m(t_1,t_2,\dots,t_m)u^m=\exp\Big(t_1u+t_2\frac{u^2}{2}+t_3\frac{u^3}{3}+\cdots\Big).
\end{align}

\section{Some useful lemmas}

In this section, we present some lemmas that will be used later.
\begin{lemma}[\cite{Li}, Lemma 2.3]\label{lem:a}
Suppose that $\ell$ is a positive integer with	$1\leq \ell\leq m$. If $t_i=a$ if $\ell\mid i$ and $t_i=0$ otherwise, then we have
\begin{align*}
Z_m(t_1,t_2,\dots,t_m)=Z_m(\underbrace{0,\dots,0}_{\ell-1},a,\underbrace{0,\dots,0}_{\ell-1},a,\dots)=[u^m]{(1-u^\ell)^{-a/\ell}}.
\end{align*}

\end{lemma}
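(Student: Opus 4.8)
The plan is to read the identity off directly from the generating function \eqref{eq:exp}, since the prescribed substitution collapses the exponent into a recognizable logarithmic series. First I would substitute $t_i=a\cdot 1_{\ell\mid i}$ into the right-hand side of \eqref{eq:exp}. Only the indices $i$ divisible by $\ell$ survive, so writing $i=\ell j$ the exponent becomes
$$
\sum_{i\geq 1}t_i\frac{u^i}{i}=a\sum_{j\geq 1}\frac{u^{\ell j}}{\ell j}=\frac{a}{\ell}\sum_{j\geq 1}\frac{(u^\ell)^j}{j}.
$$

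Next I would recognize the inner sum as the formal expansion $\sum_{j\geq 1}x^j/j=-\log(1-x)$ evaluated at $x=u^\ell$, so that the exponent equals $-\tfrac{a}{\ell}\log(1-u^\ell)$. Exponentiating and invoking the formal power series identity $\exp\bigl(-c\log(1-x)\bigr)=(1-x)^{-c}$ in $\myC[[x]]$ (with $c=a/\ell$), I obtain
$$
\sum_{m\geq 0}Z_m(t_1,\dots,t_m)\,u^m=\exp\Big(-\frac{a}{\ell}\log(1-u^\ell)\Big)=(1-u^\ell)^{-a/\ell}.
$$
Comparing the coefficient of $u^m$ on both sides then yields the claimed equality $Z_m(\dots)=[u^m](1-u^\ell)^{-a/\ell}$, which completes the argument.

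The only point requiring care is that $a/\ell$ need not be an integer, so $(1-u^\ell)^{-a/\ell}$ must be interpreted as the formal power series defined by the generalized binomial theorem; with this convention $\log(1-x)$ has zero constant term, the composition $\exp(-\tfrac{a}{\ell}\log(1-x))$ is well defined in $\myC[[x]]$, and the exponential--logarithm identity above is a routine formal identity with no convergence issues. The hypothesis $1\leq\ell\leq m$ is not essential to the identity itself, which is purely formal; it merely records the range relevant to the later application, since the surviving variable $t_\ell$ can contribute to $[u^m]$ only when $\ell\leq m$. Consequently I expect no genuine obstacle in this proof beyond fixing the formal-power-series conventions.
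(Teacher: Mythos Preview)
Your proof is correct and follows exactly the approach the paper takes: the paper does not prove this lemma directly (it is quoted from \cite{Li}), but its proof of the more general Lemma~\ref{lem:b} proceeds identically---substitute into the generating function \eqref{eq:exp}, recognize the logarithmic series, and exponentiate---and your argument is simply the specialization of that computation to a single term. Your remarks on the formal-power-series interpretation of $(1-u^\ell)^{-a/\ell}$ and on the role of the hypothesis $\ell\leq m$ are accurate and appropriately cautious.
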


\begin{lemma}\label{lem:b}
	Suppose that $\ell $ is a positive integer with	$1\leq \ell \leq m$, and that  $B$ be a finite set of complex numbers. If $t_i=\sum_{z\in B}z^{i}a$ for $\ell \mid i$ and $t_i=0$ otherwise, then we have
	\begin{align*}
	&Z_m(t_1,t_2,\dots,t_m)=Z_m(\underbrace{0,\dots 0}_{\ell-1},\sum_{z\in B}z^\ell a,\underbrace{0,\dots,0}_{\ell-1},\sum_{z\in B}z^{2\ell}a,\dots)=[u^m]\prod_{z\in B}(1-z^\ell u^\ell)^{-a/\ell}.
	\end{align*}
\end{lemma}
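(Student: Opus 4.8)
The plan is to deduce Lemma \ref{lem:b} from Lemma \ref{lem:a} by a straightforward "multiplicativity over $B$" argument, together with the generating function \eqref{eq:exp}. First I would observe that the only difference between the two lemmas is that the common value $a$ assigned to the coordinates $t_{j\ell}$ in Lemma \ref{lem:a} is replaced here by $\sum_{z\in B}z^{j\ell}a$; so the natural approach is to split this sum over $z$ and handle each $z$ separately. Concretely, set $t_i=\sum_{z\in B}z^i a$ when $\ell\mid i$ and $t_i=0$ otherwise, and for each fixed $z\in B$ let $t_i^{(z)}=z^i a$ when $\ell\mid i$ and $t_i^{(z)}=0$ otherwise, so that $t_i=\sum_{z\in B}t_i^{(z)}$ for every $i$.

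Next I would feed this into \eqref{eq:exp}. Since $t_i=\sum_{z\in B}t_i^{(z)}$, the exponent splits:
\begin{align*}
\sum_{m\geq 0}Z_m(t_1,\dots,t_m)u^m=\exp\Big(\sum_{i\geq 1}t_i\frac{u^i}{i}\Big)=\prod_{z\in B}\exp\Big(\sum_{i\geq 1}t_i^{(z)}\frac{u^i}{i}\Big)=\prod_{z\in B}\Big(\sum_{m\geq 0}Z_m(t_1^{(z)},\dots,t_m^{(z)})u^m\Big).
\end{align*}
For each fixed $z$, the tuple $(t_i^{(z)})$ is exactly of the shape treated in Lemma \ref{lem:a}, except with the substitution $u\mapsto zu$ absorbing the factor $z^i$: indeed $\sum_{i\geq1}t_i^{(z)}u^i/i=a\sum_{\ell\mid i}(zu)^i/i=-\tfrac{a}{\ell}\log(1-(zu)^\ell)$, so $\sum_{m\geq0}Z_m(t_1^{(z)},\dots,t_m^{(z)})u^m=(1-z^\ell u^\ell)^{-a/\ell}$. (Equivalently, one can cite Lemma \ref{lem:a} directly to get $Z_m(t_1^{(z)},\dots)=[v^m](1-v^\ell)^{-a/\ell}$ and then rescale $v=zu$.) Substituting back, $\sum_{m\geq0}Z_m(t_1,\dots,t_m)u^m=\prod_{z\in B}(1-z^\ell u^\ell)^{-a/\ell}$, and extracting the coefficient of $u^m$ gives the claimed identity $Z_m(t_1,\dots,t_m)=[u^m]\prod_{z\in B}(1-z^\ell u^\ell)^{-a/\ell}$.

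I do not anticipate a serious obstacle here; this is a formal power series manipulation and the content is entirely contained in \eqref{eq:exp} and Lemma \ref{lem:a}. The one point requiring a little care is the bookkeeping when $B$ is a multiset or contains repeated/zero values, and the convergence-free justification that all manipulations are legitimate at the level of formal power series in $u$ (each $Z_m$ depends only on $t_1,\dots,t_m$, and the product over the finite set $B$ is a finite product, so every coefficient of $u^m$ is a finite sum). If one prefers to avoid generating functions entirely, an alternative is a direct induction on $|B|$: the base case $|B|=1$ is Lemma \ref{lem:a} after rescaling, and the inductive step uses the convolution identity $Z_m(s+t)=\sum_{j}Z_j(s)Z_{m-j}(t)$ for "additively split" argument tuples, which itself follows from \eqref{eq:exp}. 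Either route is routine.
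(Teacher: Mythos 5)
Your proposal is correct and is essentially the paper's own argument: the paper likewise substitutes the given $t_i$ into the generating function \eqref{eq:exp}, recognizes $\sum_{\ell\mid i}(zu)^{i}/i=-\tfrac{1}{\ell}\log(1-z^\ell u^\ell)$, and exponentiates to get $[u^m]\prod_{z\in B}(1-z^\ell u^\ell)^{-a/\ell}$, only doing the whole computation in one step rather than factoring over $z\in B$ and invoking Lemma \ref{lem:a} for each factor. The two presentations are formally equivalent, so no further comment is needed.
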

\begin{proof}
	Substituting the values of $t_i$ into \eqref{eq:exp}, we see that
	\begin{align*}
	Z_m(t_1,t_2,\dots,t_m)&=
	[u^m] \exp\Big(a\sum_{i=1}^\infty \frac{\sum_{z\in B}z^{\ell i} u^{\ell i}}{\ell i}\Big)\\
	&=[u^m] \exp\Big(-\frac{a}{\ell}\sum_{z\in B}\log (1-z^\ell u^\ell )\Big)\\
	&=[u^m]\prod_{z\in B}(1-z^\ell u^\ell)^{-a/\ell}.
	\end{align*}
\end{proof}

\begin{lemma}\label{lem:c}
Suppose that $p$ is an odd prime and $n$ is a positive integer. 	Let $G=\Z/2pn\Z$ be the additive group of order $2pn$, and let $D$ be the set defined  in \eqref{eq:D}. For a positive integer $m$, let $X=D^m$ and   
   define the character sum $S_m(\chi)$ by
	\begin{align}\label{eq:Smchi}
	S_m(\chi)=\frac{1}{m!}\sum_{(x_{1},x_{2},\dots,x_{m})\in \overline{X}}\chi(x_{1})\chi(x_{2})\cdots\chi(x_{m}),
	\end{align}
	for a character $\chi:G\to\myC$ on $G$.
	Denote by $o(\chi)$ the order of the character $\chi$. Then we have
	$$
	S_m(\chi)=\begin{cases}
	(-1)^m [u^m] (1-u^{o(\chi)})^{\frac{|D|}{o(\chi)}}, & \text{if}\ p\nmid\ o(\chi);\\
	(-1)^m[u^m] \prod_{k=1}^{(p-1)/2}(1-\overline{\chi}^{\frac{o(\chi)}{p}}(k) u^{\frac{o(\chi)}{p}})^{\frac{|G|}{o(\chi)}},& \text{if}\ p \mid \ o(\chi).
	\end{cases}$$
\end{lemma}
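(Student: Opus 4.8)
The plan is to compute $S_m(\chi)$ by applying the Li--Wan sieve (Theorem \ref{thm:Li-Wan}) with the function $f(x_1,\dots,x_m)=\chi(x_1)\cdots\chi(x_m)$ and then evaluating the resulting sum over permutation-invariant subsets through the cycle-index polynomial $Z_m$. First I would observe that for a permutation $\tau\in S_m$ of type $(c_1,c_2,\dots,c_m)$, an element of $X_\tau=D^m_\tau$ is determined by choosing a value in $D$ for each cycle, and on a cycle of length $i$ the contribution to $f$ is $\chi(x)^i$; hence $\sum_{x\in X_\tau} f(x)=\prod_{i}\big(\sum_{z\in D}\chi(z)^i\big)^{c_i}$. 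Combining with $\sign(\tau)=(-1)^{m-c(\tau)}$ and formula \eqref{eq:N} for $N(c_1,\dots,c_m)$, the sieve identity becomes
\begin{align*}
S_m(\chi)=\frac{1}{m!}\sum_{x\in\overline{X}}f(x)=(-1)^m Z_m(t_1,t_2,\dots,t_m)\Big|_{t_i=-\sum_{z\in D}\chi(z)^i},
\end{align*}
after absorbing the sign $(-1)^{c(\tau)}$ into the variables $t_i$ (each cycle contributes one factor of $-1$). So everything reduces to identifying the power sums $\sigma_i:=\sum_{z\in D}\chi(z)^i$ and feeding them into $Z_m$ via Lemmas \ref{lem:a} and \ref{lem:b}.

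The heart of the argument is therefore the evaluation of $\sigma_i=\sum_{z\in D}\chi(z)^i$ for the explicit set $D=\{pj-k: -(n-1)\le j\le n,\ 1\le k\le (p-1)/2\}$ in the group $G=\Z/2pn\Z$. Here I would split on whether $p\mid o(\chi)$. If $p\nmid o(\chi)$, then as $j$ runs over a complete residue system mod $2n$ and $k$ is fixed, the values $\chi^i(pj-k)$ behave like $\chi^i(-k)$ times a sum of $\chi^i(pj)$ over $j$; since $\gcd(p,o(\chi))=1$ the map $j\mapsto pj$ permutes things appropriately, and one finds that $\sigma_i$ vanishes unless $o(\chi)\mid i$, in which case $\chi^i$ is trivial on all of $D$ and $\sigma_i=|D|$. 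Then Lemma \ref{lem:a} with $\ell=o(\chi)$, $a=|D|$ gives $Z_m=[u^m](1-u^{o(\chi)})^{-|D|/o(\chi)}$, and the extra $(-1)^m$ from absorbing the signs flips the exponent's sign, yielding the first case. If $p\mid o(\chi)$, write $o(\chi)=p\ell'$ or better track the $p$-part: now the contribution organizes itself so that $\sigma_i=0$ unless $\frac{o(\chi)}{p}\mid i$; writing $i=\frac{o(\chi)}{p}\cdot i'$, the inner sum over $k$ produces $\sum_{k=1}^{(p-1)/2}\big(\overline{\chi}^{o(\chi)/p}(k)\big)^{i'}$ (the bar and the sign coming from $pj-k$, i.e. from $\chi(-1)$ and the structure of $D$), while the sum over $j$ contributes the multiplicity $\frac{|G|}{o(\chi)}$. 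Applying Lemma \ref{lem:b} with $B=\{\overline{\chi}^{o(\chi)/p}(k): 1\le k\le (p-1)/2\}$, $\ell=o(\chi)/p$ and $a=|G|/o(\chi)$, together with the sign absorption, produces exactly the second case.

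I expect the main obstacle to be the bookkeeping in the second case: correctly isolating the $p$-part of $o(\chi)$, verifying that $\sigma_i$ really does vanish unless $\frac{o(\chi)}{p}\mid i$, and pinning down the precise form of the base set $B$ — in particular whether it is $\chi$ or $\overline\chi$ evaluated at $k$ and how the factor $\chi(-1)^s$ (or the parity of $n(p-1)/2$) interacts here. The clean way to handle this is to note that $D$ is, up to translation, closely related to a union of cosets: for fixed $k$, the set $\{pj-k: -(n-1)\le j\le n\}$ is a full coset of the subgroup $p\Z/2pn\Z$ (which has order $2n$), so $\sum_{j}\chi^i(pj-k)=\chi^i(-k)\sum_{j}\chi^i(pj)$, and $\sum_j \chi^i(pj)$ is either $0$ or $2n$ according as $\chi^i$ restricted to $p\Z/2pn\Z$ is nontrivial or trivial — the latter happening precisely when $o(\chi)\mid pi$, i.e. $\frac{o(\chi)}{\gcd(o(\chi),p)}\mid i$. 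Once this dichotomy is stated cleanly, both cases fall out by substituting into $Z_m$, and the only remaining care is matching $\chi^i(-k)$ to the factor $(1-\overline\chi^{o(\chi)/p}(k)u^{o(\chi)/p})$ in the statement, which is a matter of relabeling $k\mapsto -k$ and using $|D|=2n\cdot\frac{p-1}{2}$, $|G|=2pn$.
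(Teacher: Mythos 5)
Your overall route is exactly the paper's: apply the Li--Wan sieve to $f(x_1,\dots,x_m)=\chi(x_1)\cdots\chi(x_m)$, organize by cycle type so that $S_m(\chi)=(-1)^m Z_m(t_1,\dots,t_m)$ with $t_i=-\sum_{z\in D}\chi^i(z)$, evaluate the power sums via the coset decomposition $[D]=\bigcup_{k=1}^{(p-1)/2}(pG-k)$ with $pG=p\Z/2pn\Z$, and finish with Lemmas \ref{lem:a} and \ref{lem:b}. The dichotomy you state at the end (the $j$-sum is $0$ or $2n$, the latter exactly when $o(\chi)\mid pi$, i.e.\ $\frac{o(\chi)}{\gcd(o(\chi),p)}\mid i$) is the same classification the paper uses, and Case 1 comes out correctly (modulo wording: the exponent flip comes from the sign absorbed into $t_i$, i.e.\ $a=-|D|$ in Lemma \ref{lem:a}, not from the prefactor $(-1)^m$, which just stays out front).

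The one step that would fail as literally written is your invocation of Lemma \ref{lem:b} in Case 2. For $\frac{o(\chi)}{p}\mid i$ one has $t_i=-\sigma_i=-\tfrac{|G|}{p}\sum_{k=1}^{(p-1)/2}\overline{\chi}(k)^{i}$, since the $j$-sum contributes $2n=|G|/p$ (the order of $pG$), not $|G|/o(\chi)$ as you say mid-argument (your own final paragraph has the correct value $2n$). To match the hypothesis $t_i=\sum_{z\in B}z^{i}a$ of Lemma \ref{lem:b} you must therefore take $B=\{\overline{\chi}(k):1\le k\le (p-1)/2\}$, $\ell=o(\chi)/p$ and $a=-|G|/p$; the lemma then gives $Z_m=[u^m]\prod_k\bigl(1-\overline{\chi}^{\,o(\chi)/p}(k)u^{o(\chi)/p}\bigr)^{-a/\ell}$ with $-a/\ell=\frac{|G|/p}{o(\chi)/p}=\frac{|G|}{o(\chi)}$, which is the claimed exponent. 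With your choices $B=\{\overline{\chi}^{\,o(\chi)/p}(k)\}$ and $a=|G|/o(\chi)$, the hypothesis of Lemma \ref{lem:b} is not satisfied (the lemma uses $z^{i}$, not $z^{i/\ell}$, so your $B$ would force exponents $\overline{\chi}(k)^{i\,o(\chi)/p}\ne\overline{\chi}(k)^{i}$), and the resulting exponent $-a/\ell=-p|G|/o(\chi)^2$ is wrong in both size and sign. This is a local, fixable bookkeeping error — the ingredients for the fix are already in your last paragraph — but as stated the substitution does not yield the formula of the lemma.
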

\begin{proof}
	Using  Theorem \ref{thm:Li-Wan}, we can write $S_m(\chi)$ as
	\begin{align}\label{eq:S1}
	S_m(\chi)=\frac{1}{m!}\sum_{\tau\in S_k}\sign(\tau)\sum_{(x_1,x_2,\dots,x_m)\in X_\tau}\chi(x_1)\chi(x_2)\cdots\chi(x_m).
	\end{align}
	Let $\tau=C_1\cdots C_{c(\tau)}$ be a disjoint cycle product of $\tau$. Then from the definition of $X_\tau$, we see that
	\begin{align}\label{eq:S2}
	\sum_{(x_1,x_2,\dots,x_m)\in X_\tau}\chi(x_1)\chi(x_2)\cdots\chi(x_m)=\prod_{i=1}^{c(\tau)} \Big(\sum_{x\in D} \chi^{\ell_i}(x)\Big),
	\end{align}
	where $\ell_i$ denotes the length of the cycle $C_i$, $1\leq i\leq c(\tau)$. 
	
	Let $[D]$ denote the image of $D$ under the quotient map $q:\Z\to G$ that sends $a$ to $a+2pn\Z$. We observe from \eqref{eq:D} that $[D]$ is a disjoint union of translations of the subgroup $pG$, where $pG=\{pg:g\in G\}$. Precisely, we have $[D]=\bigcup_{k=1}^{(p-1)/2}(pG-k)$, which implies that
	$$
	\sum_{x\in D}\chi(x)=\sum_{k=1}^{(p-1)/2}\sum_{x\in pG}\chi(x-k)=\Big(\sum_{k=1}^{(p-1)/2}\overline{\chi}(k)\Big)\sum_{x\in pG}\chi(x).
	$$
   The sum $\sum_{x\in pG}\chi(x)$ vanishes unless $\chi$ is a trivial character on $pG$ for which $o(\chi)=1$ or $p$. This implies that
   \begin{itemize}
		\item if $o(\chi)\neq 1,p$, then $\sum_{x\in D}\chi(x)=0$;
		\item if $o(\chi)=1$, then $\sum_{x\in D}\chi(x)=|D|$;
		\item if $o(\chi)=p$, then $\sum_{x\in D}\chi(x)=(\sum_{k=1}^{(p-1)/2}\overline{\chi}(k))|G|/p$.
	\end{itemize}
	It follows from the above classification that in the case $p\nmid o(\chi)$,  $\sum_{x\in D} \chi^{i}(x)=|D|$ if $o(\chi)\mid i$ and $\sum_{x\in D} \chi^{i}(x)=0$ otherwise; in the case $p\mid o(\chi)$,  $\sum_{x\in D}\chi^{i}(x)=(
	\sum_{k=1}^{(p-1)/2}\overline{\chi}^{i}(k))|G|/p$ if $\frac{o(\chi)}{p}\mid i$ and $\sum_{x\in D} \chi^{i}(x)=0$ otherwise. Thus we have two cases.

	{\bf Case 1: $p\nmid o(\chi)$.} In this case, we have $\sum_{x\in D}\chi^{i}(x)=|D|1_{o(\chi)\mid i}$. From \eqref{eq:S1} and \eqref{eq:S2}, we deduce that
	\begin{align*}
	S_m(\chi)&=\frac{1}{m!}\sum_{\tau\in S_m}\sign(\tau)\prod_{i=1}^\ell \Big(\sum_{x\in D} \chi^{\ell_i}(x)\Big)\\
	&=\frac{1}{m!}\sum_{\sum i c_i=m}N(c_1,c_2,\dots,c_m) (-1)^{m-\sum_{i=1}^m c_i}\prod_{i=1}^m (|D|1_{o(\chi)\mid i})^{c_i}\\
	&=(-1)^mZ_m(\underbrace{0,\dots,0}_{o(\chi)-1},-|D|,\underbrace{0,\dots,0}_{o(\chi)-1},-|D|,\dots)\\
	&=(-1)^m [u^m] (1-u^{o(\chi)})^{\frac{|D|}{o(\chi)}}.
	\end{align*}
	The last step is due to Lemma \ref{lem:a}.
	
	{\bf Case 2: $p\mid o(\chi)$}. In this case, we have $\sum_{x\in D}\chi^{i}(x)=(\sum_{k=1}^{(p-1)/2}\overline{\chi}^{i}(k))\frac{|G|}{p}1_{\frac{o(\chi)}{p}\mid i}$. A similar calculation as in case 1 shows that \begin{align*}
	S_m(\chi)&=(-1)^mZ_m(\underbrace{0,\dots,0}_{\frac{o(\chi)}{p}-1},\sum_{k=1}^{(p-1)/2}\overline{\chi}^{\frac{o(\chi)}{p}}(k)\frac{|G|}{p},\underbrace{0,\dots,0}_{\frac{o(\chi)}{p}-1},\sum_{k=1}^{(p-1)/2}\overline{\chi}^{\frac{2o(\chi)}{p}}(k)\frac{|G|}{p},\dots)\\
	&=(-1)^m[u^m] \prod_{k=1}^{(p-1)/2}(1-\overline{\chi}^{\frac{o(\chi)}{p}}(k) u^{\frac{o(\chi)}{p}})^{\frac{|G|}{o(\chi)}},
	\end{align*}
	where we used Lemma \ref{lem:b}.
	
\end{proof}

\begin{lemma}\label{lem:d}
	Let be $p$ be an odd prime, and let $r$ be an integer with $1\leq r\leq p-1$. Then we have
	$$\prod_{k=1}^{(p-1)/2}(1-e^{2\pi i k r/p})^2=pe^{\pi i (\frac{p^2-1}{4p}r+\frac{p-1}{2})}.$$
\end{lemma}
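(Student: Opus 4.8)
The plan is to identify the left-hand side as a ``folded'' version of the full cyclotomic product $\prod_{j=1}^{p-1}(1-\zeta^{j})=p$, where $\zeta=e^{2\pi i/p}$, and then keep careful track of the phase that is picked up when the ``negative half'' of the exponents is collapsed onto the ``positive half''. The key algebraic inputs are (i) that $k\mapsto kr$ permutes the nonzero residues mod $p$ since $\gcd(r,p)=1$, and (ii) the elementary identity $1-\zeta^{-a}=-\zeta^{-a}(1-\zeta^{a})$ used in Section~\ref{sec:reduction}.

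Concretely, I would proceed as follows. First, since $\gcd(r,p)=1$, the two sets $\{kr\bmod p:1\le k\le (p-1)/2\}$ and $\{-kr\bmod p:1\le k\le (p-1)/2\}$ partition $\{1,2,\dots,p-1\}$, so
\begin{align*}
\prod_{k=1}^{(p-1)/2}(1-\zeta^{kr})(1-\zeta^{-kr})=\prod_{j=1}^{p-1}(1-\zeta^{j})=p,
\end{align*}
the last equality being the value at $x=1$ of $\prod_{j=1}^{p-1}(x-\zeta^{j})=1+x+\cdots+x^{p-1}$. Next, applying $1-\zeta^{-kr}=-\zeta^{-kr}(1-\zeta^{kr})$ to each factor in the left product gives
\begin{align*}
p=\prod_{k=1}^{(p-1)/2}\bigl(-\zeta^{-kr}\bigr)(1-\zeta^{kr})^{2}=(-1)^{(p-1)/2}\,\zeta^{-r\sum_{k=1}^{(p-1)/2}k}\prod_{k=1}^{(p-1)/2}(1-\zeta^{kr})^{2}.
\end{align*}
Solving for the target product and inserting $\sum_{k=1}^{(p-1)/2}k=\tfrac{(p-1)(p+1)}{8}=\tfrac{p^{2}-1}{8}$, one obtains
\begin{align*}
\prod_{k=1}^{(p-1)/2}(1-\zeta^{kr})^{2}=p\,(-1)^{(p-1)/2}\,\zeta^{r(p^{2}-1)/8}=p\,e^{\pi i(p-1)/2}\,e^{\pi i r(p^{2}-1)/(4p)},
\end{align*}
which is exactly the claimed identity once the two exponentials are combined into $e^{\pi i(\frac{p^{2}-1}{4p}r+\frac{p-1}{2})}$.

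I do not expect any real obstacle: the whole argument is a short manipulation of $p$-th roots of unity, and the only point requiring care is the bookkeeping of the accumulated phase $(-1)^{(p-1)/2}\zeta^{-r\sum k}$ and the rewriting $(-1)^{(p-1)/2}=e^{\pi i(p-1)/2}$, $\zeta^{r(p^{2}-1)/8}=e^{\pi i r(p^{2}-1)/(4p)}$ so as to land on the stated normalization. No branch-of-logarithm issue arises since every quantity in sight is a finite product of explicit complex numbers. As a sanity check one can verify the formula in the smallest case $p=3$, $r=1$: the left side is $(1-e^{2\pi i/3})^{2}=3e^{-\pi i/3}$, while the right side is $3\,e^{\pi i(\frac{8}{12}+1)}=3\,e^{5\pi i/3}=3e^{-\pi i/3}$, in agreement.
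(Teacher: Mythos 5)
Your proof is correct, and it reaches the identity by a route that differs from the paper's in how the phase is pinned down. The paper splits the computation into modulus and argument: it gets $|P|=p$ from $P\overline{P}=\prod_{k=1}^{p-1}(1-e^{2\pi i kr/p})^2=p^2$, and then determines $\arg(P)$ only modulo $2\pi$ via the trigonometric identity $(1-e^{i\theta})^2=2(1-\cos\theta)e^{i(\theta+\pi)}$, summing the arguments $2\pi rk/p+\pi$ over $k$. You instead exploit the same $k\leftrightarrow -k$ symmetry once, but purely algebraically: the partition of the nonzero residues into $\{kr\}$ and $\{-kr\}$ gives $\prod_k(1-\zeta^{kr})(1-\zeta^{-kr})=p$, and the folding identity $1-\zeta^{-a}=-\zeta^{-a}(1-\zeta^{a})$ converts this into an exact closed equation $p=(-1)^{(p-1)/2}\zeta^{-r(p^2-1)/8}\,P$ that you solve for $P$ in one step, with the phase emerging automatically (and exactly, not just mod $2\pi$) from $(-1)^{(p-1)/2}\zeta^{r(p^2-1)/8}=e^{\pi i(\frac{p^2-1}{4p}r+\frac{p-1}{2})}$; the integrality of $(p^2-1)/8$ keeps everything a genuine power of $\zeta$. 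Your version buys a slightly cleaner, purely algebraic argument with no separate modulus/argument bookkeeping and no appeal to a trig identity, and it dovetails nicely with the identity $1-q^{-j}$ versus $1-q^{j}$ already used in Section~\ref{sec:reduction}; the paper's version is marginally more pedestrian but makes the modulus $\sqrt{p}$ of the half-product (used later to bound the error term) immediately visible. Both are complete proofs; your bookkeeping of $\sum_{k=1}^{(p-1)/2}k=\frac{p^2-1}{8}$ and the final rewriting are accurate, as your $p=3$ check confirms.
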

\begin{proof}
	Let $P$ denote the product on the left-hand side. Then we have
	\begin{align*}
	|P|^2=P\overline{P}&=\prod_{k=1}^{(p-1)/2}(1-e^{2\pi i k r/p})^2\prod_{k=1}^{(p-1)/2}(1-e^{-2\pi i k r/p})^2\\
	&=\prod_{k=1}^{(p-1)/2}(1-e^{2\pi i k r/p})^2\prod_{k=1}^{(p-1)/2}(1-e^{2\pi i (p-k) r/p})^2=\prod_{k=1}^{p-1}(1-e^{2\pi i k r/p})^2=p^2,
	\end{align*}
	where we used the fact that $\{e^{2\pi i k r/p},\ 1\leq k\leq p-1\}$ is a complete list of the primitive $p$-th roots of unity. This gives $|P|=p$. 
	
	 Then we are to determine the argument of $P$. From the elementary equality $(1-e^{i\theta})^2=2(1-\cos(\theta))e^{i(\theta+\pi)}$, we see that $\arg((1-e^{2\pi i rk/p})^2)=2\pi rk/p+\pi\mmod{2\pi}$, which yields
	\begin{align*}
	\arg(P)=  \sum_{k=1}^{(p-1)/2} (2\pi  rk/p+\pi)\mmod{2\pi}=\pi(\frac{p^2-1}{4p}r+\frac{p-1}{2})\mmod{2\pi}.
	\end{align*}
	
	Combining the results of modulus and argument of $P$, we conclude that
	$$P=\prod_{k=1}^{(p-1)/2}(1-e^{2\pi i k r/p})^2=pe^{\pi i (\frac{p^2-1}{4p}r+\frac{p-1}{2})}.
	$$
\end{proof}


\section{Proof of Theorem \ref{thm:main}}

As mentioned in the last part of Section \ref{sec:reduction}, the main problem is reduced to counting the quantity  $N_D(m_1,\dots,m_s,b)$. We shall use some character theory to estimate it. Again, let $G=\Z/2pn\Z$ be the additive group of order $2pn$ and let $D$ be the set defined in \eqref{eq:D}. Let $X_i=D^{m_i}$ be the  Cartesian product of $m_i$ copies of $D$, $1\leq i\leq s$. For an ordered $m$-tuple $x=(x_1,x_2,\dots,x_m)\in D^m$, denote by $\sigma(x):=\sum_{i=1}^m x_m$ the sum of its coordinates. Using the fact that $\frac{1}{|G|}\sum_{\chi\in G}\chi(x)$ is $1$ if $x=0_G$ and is $0$ otherwise,  we can express $N_D(m_1,m_2,\dots,m_2,b)$ as
\begin{align}\label{eq:NDD}
\begin{split}
&N_{D}(m_1,m_2,\dots,m_s,b)\\
 =&\frac{1}{m_1!m_2!\cdots m_s!}\sum_{(x_1,x_2,\dots,x_s)\in \overline{X_1}\times \overline{X_2}\times \cdots\times \overline{X}_s}1_{\sigma(x_1)+\sigma(x_2)+\cdots+\sigma(x_s)=b}\\
 =&\frac{1}{m_1!m_2!\cdots m_s!}\sum_{(x_1,x_2,\dots,x_s)\in \overline{X_1}\times \overline{X_2} \cdots\times \overline{X}_s}\frac{1}{|G|}\sum_{\chi\in\widehat{G}}\chi(\sigma(x_1)+\sigma(x_2)+\cdots+\sigma(x_s)-b)\\
=&\frac{1}{|G|}\sum_{\chi\in\widehat{G}}\overline{\chi}(b)\prod_{i=1}^s\frac{\sum_{x_i\in \overline{X_i}}\chi(\sigma(x_i))}{m_i!}\\
 =&\frac{1}{|G|} \sum_{\chi\in\widehat{G}}\overline{\chi}(b)\prod_{i=1}^s S_{m_i}(\chi),
\end{split}
\end{align}
where the character sum $S_{m_i}(\chi)$ is defined as in  \eqref{eq:Smchi}.


Now we are ready to estimate $N_D(b)$. By Lemma \ref{lem:c} and equation \eqref{eq:NDD}, we have
\begin{align}
\begin{split}\label{eq:NDb0}
N_D(b)&=\sum_{0\leq m_i\leq |D|,1\leq i\leq s} (-1)^{\sum_{i=1}^s m_i}N_D(m_1,m_2,\dots,m_s,b)\\
     &\quad=\frac{1}{|G|}\Big(\sum_{\chi\in\widehat{G}:p\mid o(\chi)}\overline{\chi}(b)\sum_{0\leq m_i\leq |D|,1\leq i\leq s}\prod_{i=1}^s [u^{m_i}]\prod_{k=1}^{(p-1)/2}\big(1-\overline{\chi}^{\frac{o(\chi)}{p}}(k)u^{\frac{o(\chi)}{p}}\big)^{\frac{|G|}{o(\chi)}}\\
&\quad+\sum_{\chi\in\widehat{G}:p\nmid o(\chi)}\overline{\chi}(b)\sum_{0\leq m_i\leq |D|,1\leq i\leq s}\prod_{i=1}^s[u^{m_i}]\big(1-u^{o(\chi)}\big)^{\frac{|D|}{o(\chi)}}\Big).
\end{split}
\end{align}
We note that
\begin{align*}
\sum_{0\leq m_1,\dots,m_s\leq |D|}\prod_{i=1}^s [u^{m_i}]\prod_{k=1}^{(p-1)/2}\big(1-\overline{\chi}^{\frac{o(\chi)}{p}}(k)u^{\frac{o(\chi)}{p}}\big)^{\frac{|G|}{o(\chi)}}&=\prod_{i=1}^s\Big(\sum_{m_i=0}^{|D|}[u^{m_i}]\prod_{k=1}^{(p-1)/2}\big(1-\overline{\chi}^{\frac{o(\chi)}{p}}(k)u^{\frac{o(\chi)}{p}}\big)^{\frac{|G|}{o(\chi)}}\Big)\\
&=\prod_{k=1}^{(p-1)/2}(1-\overline{\chi}^{\frac{o(\chi)}{p}}(k))^{\frac{s|G|}{o(\chi)}}
\end{align*}
and similarly
\begin{align*}
\sum_{0\leq m_i\leq |D|,1\leq i\leq s}\prod_{i=1}^s[u^{m_i}](1-u^{o(\chi)})^{\frac{|D|}{o(\chi)}}=(1-1^{o(\chi)})^{\frac{s|D|}{o(\chi)}}=0.
\end{align*}
Substituting these into \eqref{eq:NDb0}, we obtain
\begin{align}\label{eq:NDb}
N_D(b)=\frac{1}{|G|}\sum_{\chi\in\widehat{G}:p\mid o(\chi)}\overline{\chi}(b)\prod_{k=1}^{(p-1)/2}(1-\overline{\chi}^{\frac{o(\chi)}{p}}(k))^{\frac{s|G|}{o(\chi)}}.
\end{align}

We split the sum \eqref{eq:NDb} into two parts:
\begin{align*}
N_D(b)&=\frac{1}{|G|}\sum_{\chi\in\widehat{G}:o(\chi)=p}\overline{\chi}(b)\prod_{k=1}^{(p-1)/2}(1-\overline{\chi}^{\frac{o(\chi)}{p}}(k))^{\frac{s|G|}{p}}+\frac{1}{|G|}\sum_{\chi\in\widehat{G}:p\mid o(\chi),o(\chi)>p}\overline{\chi}(b)\prod_{k=1}^{(p-1)/2}(1-\overline{\chi}^{\frac{o(\chi)}{p}}(k))^{\frac{s|G|}{o(\chi)}}\\
&=\frac{1}{|G|}\sum_{\chi\in\widehat{G}:o(\chi)=p}\overline{\chi}(b)\prod_{k=1}^{(p-1)/2}(1-\overline{\chi}(k))^{\frac{s|G|}{p}}+O\Big(\max_{\chi\in\widehat{G}:p\mid o(\chi),o(\chi)>p}\Big|\prod_{k=1}^{(p-1)/2}(1-\overline{\chi}^{\frac{o(\chi)}{p}}(k))\Big|^{\frac{s|G|}{2p}}\Big).
\end{align*}
Note that the implied constant in the big O notation can be $1$. Since $G=\Z/2pn\Z$,  we have $|G|=2pn$ and thus $N_D(b)$ is equal to
\begin{align}\label{eq:NDb2}
N_D(b)=\frac{1}{2pn}\sum_{\chi\in\widehat{G}:o(\chi)=p}\overline{\chi}(b)\prod_{k=1}^{(p-1)/2}(1-\overline{\chi}(k))^{2sn}+O\Big(\max_{\chi\in\widehat{G}:p\mid o(\chi),o(\chi)>p}\Big|\prod_{k=1}^{(p-1)/2}(1-\overline{\chi}^{\frac{o(\chi)}{p}}(k))\Big|^{sn}\Big).
\end{align}
Using Lemma \ref{lem:d}, we have
\begin{align}\label{eq:NDb3}
N_D(b)&=\frac{p^{sn}}{2pn}\sum_{r=1}^{p-1}e^{2\pi i \frac{b}{p}r}e^{ \pi i sn(\frac{p^2-1}{4p}r+\frac{p-1}{2})}+O(p^{sn/2}).
\end{align}
Note that for a character $\chi$ with $p\mid o(\chi)$, the character $\chi^{\frac{o(\chi)}{p}}$ is of order $p$. Thus Lemma \ref{lem:d} also implies that
$|\prod_{k=1}^{(p-1)/2}(1-\overline{\chi}^{\frac{o(\chi)}{p}}(k))|=\sqrt{p}$ for a character $\chi$ with $p\mid o(\chi)$ and consequently the error term in \eqref{eq:NDb2} is $O(p^{sn/2})$.

The sum in \eqref{eq:NDb3} is a geometric series with common ratio of $e^{2\pi i(\frac{b}{p}+sn\frac{p^2-1}{8p})}$, so $N_D(b)$ is equal to
\begin{align}\label{eq:NDb1}
N_D(b)=\frac{p^{sn}}{2pn}e^{\pi i sn\frac{p-1}{2}}(p-1)+O(p^{sn/2})=(-1)^{sn(p-1)/2}\frac{(p-1)p^{sn}}{2pn}+O(p^{sn/2}).
\end{align}
if $
b+sn(p^2-1)/8=0\mmod{p}$, and $N_D(b)$ is equal to
\begin{align}\label{eq:NDb4}
\begin{split}
N_D(b)
&=\frac{p^{sn}}{2pn}e^{\pi i  sn\frac{p-1}{2}}\frac{e^{2\pi i (\frac{b}{p}+sn\frac{p^2-1}{8p})}-1}{1-e^{2\pi i (\frac{b}{p}+sn\frac{p^2-1}{8})}}+O(p^{sn/2})\\
&=-(-1)^{sn(p-1)/2}\frac{p^{sn}}{2pn}+O(p^{sn/2}),
\end{split}
\end{align}
otherwise, where we used $p^2-1=0\mmod {8}$ for an odd prime $p$.


In view of \eqref{eq:Sb}, equations \eqref{eq:NDb1} and\eqref{eq:NDb4} imply that
$$S_{2pn,b}=\frac{(p-1)p^{sn}}{2pn}+O(p^{sn/2})$$ if
\begin{align}\label{eq:condition}
b-e+sn\cdot\frac{p^2-1}{8}=0\mmod{p},
\end{align}
and 
\begin{align*}
S_{2pn,b}=-\frac{p^{sn}}{2pn}+O(p^{sn/2})
\end{align*}
otherwise. Recall that $e=sn(p-1)(2pn+1-p)/8$. A direct calculation shows that the condition \eqref{eq:condition} can be simplified as
$$b-\frac{n(n-1)}{2}\cdot\frac{p-1}{2}\cdot sp=0\mmod{p},$$
which is equivalent to $p\mid b$, since $\frac{n(n-1)}{2}$ is always an integer and $p-1$ is an even number. 
This completes the proof. \qed

\end{document}